\definecolor{flaggreen}{RGB}{4,106,56}
\definecolor{flagnavy}{RGB}{6,3,141}
\definecolor{flagsaffron}{RGB}{255,103,31}
\newtheorem{thm}{Theorem}[section]
\newtheorem{prop}[thm]{Proposition}
\newtheorem{lem}[thm]{Lemma}
\newtheorem{cor}[thm]{Corollary}
\theoremstyle{definition}
\newtheorem{defn}[thm]{Definition}
\newtheorem{rem}[thm]{Remark}
\newtheorem{ques}[thm]{Question}
\newtheorem*{prop-MC}{Proposition 4.28}
\newcommand{\norm}[1]{\left\lVert{#1}\right\rVert}
\DeclareMathOperator{\Lk}{Lk}
\DeclareMathOperator{\CAT}{CAT}
\newtheorem*{rep@theorem}{\rep@title}
\newcommand{\newreptheorem}[2]{%
\newenvironment{rep#1}[1]{%
 \def\rep@title{#2 \ref{##1}}%
 \begin{rep@theorem}}%
 {\end{rep@theorem}}}
\begin{document}

\title[superexponential distortion of free groups in virtually special Groups] {superexponential distortion of free groups in virtually special Groups}

\begin{abstract}
For all integers $k, m > 0$, we construct a virtually special group $G$ containing a finite rank free subgroup $F$ whose distortion function in $G$ grows like $\exp^k(x^m)$. We also construct examples of virtually special groups containing finite rank free subgroups whose distortion functions grow bigger than any iterated exponential. 
\end{abstract} 

\author{Pratit Goswami}
\address{University of Oklahoma, Norman, OK 73019-3103, USA}
\email{Pratit.Goswami-1@ou.edu}

\author{Maya Verma}
\address{University of Oklahoma, Norman, OK 73019-3103, USA}
\email{maya.verma-1@ou.edu}

\maketitle

\section{Introduction}

There has been intense interest in virtually special groups and their subgroups, mainly due to the work of Agol, who solved the virtual fibering conjecture in \cite{Agol}, primarily using machinery from \cite{spcl}. In \cite{spcl}, Haglund and Wise introduced special cube complexes, which are compact nonpositively curved cube complexes whose fundamental groups embed in right-angled Artin groups. A group $G$ is called \emph{virtually special} if it has a finite-index subgroup $H$ such that $H$ is the fundamental group of some special cube complex $X$. The main result in \cite{Agol} is that non-positively curved cube complexes with hyperbolic fundamental groups are virtually special.

We highlight certain properties of virtually special groups that make them an interesting class of groups in geometric group theory. Virtually special groups admit proper cocompact action on CAT($0$) cube complexes, are subgroups of SL$_n(\mathbb{Z})$, are residually finite, and satisfy a stronger version of Tit's alternative [\cite{wiseann}, 14.10]. Although arising from a nice geometric perspective, virtually special groups exhibit subgroups with interesting geometric and algebraic behaviors. One way to understand the geometry of subgroups is to study their \emph{distortion functions} (see Definition~\ref{dist}). One can address the following question: What distortion can finitely generated subgroups of virtually special groups exhibit? In \cite{osapir}, Olshanskii and Sapir proved that the set of distortion functions of finitely generated subgroups of $F_2 \times F_2$ coincides with the set of Dehn functions of finitely presented groups. Now, the Dehn functions of finitely presented groups consist of a huge range of functions by the work of \cite{SBR} and \cite{sapir}. $F_2 \times F_2$ being virtually special, a broad family of functions is known to be distortion functions of finitely generated subgroups of virtually special groups. 

 In this paper, we address the following question: What distortion can finitely presented subgroups of virtually special groups exhibit? We record some known results on this. In \cite{HW}, Hagen and Wise constructed examples of virtually special groups containing finite rank free subgroups whose distortion functions grow like $\exp(x)$. In \cite{BS}, for all integers $m >0$, Brady and Soroko constructed examples of virtually special groups containing finite rank free subgroups whose distortion functions grow like $x^m$. In \cite{DARI}, for all integers $p > q >0$ and $k > 0$, Dani and Riley constructed examples of hyperbolic groups containing finite rank free subgroups whose distortion functions grow like $\exp^k(x^{\frac{p}{q}})$. For $k=1$, these hyperbolic groups turn out to be cubulated by the work from \cite{MR2053602}, and hence virtually special by Theorem~\ref{Agol's theorem}.
Our two main results shed some light on the question above.

For $k \in \mathbb{N}$, we denote the $k$-fold iterated exponential function by $\exp^k(x)$, i.e. $\exp^1(x)=e^x$, and $\exp^k(x)=\exp(\exp^{k-1}(x))$ for $k \geq 2$.

\begin{thm}\label{thm1}
For all integers $m > 0$ and $k > 0$, there exists a virtually special group $G$ containing a finite rank free group $F$ such that the distortion function of $F$ in $G$ grows like $\exp^k(x^m)$.
\end{thm}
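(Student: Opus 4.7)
The plan is to construct $G$ as the fundamental group of a special cube complex built by iterating, $k$ times, a distortion-exponentiating HNN operation on top of a Brady--Soroko polynomial-distortion block. By \cite{BS} there is a virtually special group $G_0$ with a finite-rank free subgroup $F_0 \leq G_0$ whose distortion function in $G_0$ is equivalent to $x^m$; this serves as the base case $i = 0$ of an induction on $i = 0, 1, \dots, k$.

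At stage $i$, I would build $G_i$ from $G_{i-1}$ as a graph-of-groups / mapping torus modelled on the Hagen--Wise exponential construction in \cite{HW}. Concretely, let $X_{i-1}$ be a non-positively curved cube complex with $\pi_1 X_{i-1} = G_{i-1}$; I form $X_i$ by attaching a combinatorial annulus with stable letter $t_i$ so that conjugation by $t_i$ carries a short loop to a loop whose image in $G_{i-1}$ lies in $F_{i-1}$ and has much larger length. The word $t_i^{-n} w t_i^n$ then has length $O(n)$ in $G_i$ while unfolding to a $G_{i-1}$-word of length $\sim 2^n \cdot |w|$ in $F_{i-1}$; this is the step that adds one exponential to the distortion, and $F_i$ is taken to be the free subgroup generated by $t_i$ together with a suitable subset of $F_{i-1}$.

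Three things must then be verified:
\emph{(a)} the upper bound on the distortion of $F_i$ in $G_i$, by induction on $i$ using the explicit HNN normal forms above;
\emph{(b)} the matching lower bound, via a $t$-corridor / van Kampen-diagram analysis that forces any $G_i$-word representing a deep element of $F_i$ to unfold into a $G_{i-1}$-word whose $F_{i-1}$-length triggers the inductive lower bound;
\emph{(c)} virtual specialness of each $G_i$, via the Haglund--Wise hyperplane criterion from \cite{spcl} (two-sidedness and the absence of self-intersection, self-osculation, and inter-osculation), possibly after passage to a finite cover.

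The main obstacle I expect is step \emph{(c)}. The hydra groups of Dani--Riley \cite{DARI} already exhibit $\exp^k(x^{p/q})$ distortion in a hyperbolic setting, yet only the case $k = 1$ is known to be cubulated; proving cubulation of the higher hydra groups appears genuinely hard, so I would not route through those examples but instead hand-craft the cube complexes $X_i$ to make the Haglund--Wise conditions verifiable directly. The delicate point is that naive HNN attachments along loops in $X_{i-1}$ routinely create self-osculating or inter-osculating hyperplanes. I would resolve this by pre-subdividing $X_{i-1}$ and, using cyclic-subgroup separability (available because $G_{i-1}$ is virtually special), passing to a finite cover in which the attaching loops are embedded combinatorial geodesics; the annular gluings can then be chosen so that the new hyperplanes meet the pre-existing ones transversely and without osculation. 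Sustaining this specialness uniformly through all $k$ iterations is likely to be the most technical part of the argument.
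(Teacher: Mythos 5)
Your overall strategy---iterating a distortion-exponentiating operation on top of a Brady--Soroko polynomial-distortion base---is the right shape, but you have chosen a different mechanism (HNN extensions verified special by hand) from the paper (amalgamations along ultra-convex subcomplexes, with specialness via Agol plus relative hyperbolicity), and I think your version has two genuine gaps.

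\textbf{The specialness step (c) is not actually accessible in the way you propose.} Since $G_0 \cong G_{m,m}$ contains $\mathbb{Z}^2$, no $G_i$ in your tower is hyperbolic, so Agol's theorem is unavailable. You acknowledge this and propose verifying the Haglund--Wise conditions by hand after passing to finite covers produced by cyclic-subgroup separability. But cyclic separability of $G_{i-1}$ only lets you unwrap short loops; to kill self-osculation and inter-osculation among hyperplanes created by the HNN attachment you would need something like quasiconvex separability and double-coset separability for the edge group inside $G_i$ itself, which is exactly the kind of statement you are trying to prove, not a tool you can assume. The paper avoids this circularity entirely: the part of the amalgam away from $G_{m,m}$ is hyperbolic and cubulated, hence virtually special by Agol; the full group is shown to be relatively hyperbolic with virtually special peripheral $G_{m,m}$; and then a theorem of Groves--Manning type (the paper cites \cite[Theorem~A]{MR4413505}) gives virtual specialness of the whole group. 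That route is what actually closes step (c).

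\textbf{You have not addressed a concrete geometric obstruction to the gluing.} In the Brady--Soroko cube complex $K_{m,m}$, the generators $A_i, B_j$ of the polynomially distorted free subgroup correspond to \emph{diagonals} of squares, not to edges of the cube complex. An HNN attachment (or an amalgamation) whose edge space maps to loops reading these diagonal generators does not respect the cubical structure, so there is no reason the resulting complex is a non-positively curved cube complex at all, let alone special. The paper resolves exactly this by inserting an intermediate $2$-vertex complex $Q_m$ (Section~\ref{s6}; see Remark~\ref{why}): there the diagonals of $K_{m,m}$ are realized as length-two edge-paths, which makes $F_{2m}$ an \emph{ultra-convex} $1$-subcomplex of $Z_m$ and allows the gluing $K_{m,m}\sqcup_{S_m} Z_m \sqcup_{R_{36m}} Y_{k-1,m}$ to satisfy the large-link condition. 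Without some analogue of this device, your inductive step has no non-positively curved cube complex to work with.

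Two smaller points: your definition of $F_i$ as ``generated by $t_i$ together with a subset of $F_{i-1}$'' cannot be right as written---including a stable letter would undo the distortion you are trying to create; the distorted subgroup should stay inside the deepest block and the new stable letters should live outside it. And for the exponentiating building block itself, the paper does not use Hagen--Wise mapping tori but rather Wise-style multiple-stable-letter groups $P_n = \langle a_j, t_i \mid t_i a_j t_i^{-1} = W_{ij}\rangle$ with no-two-letter-repetition relators, which come with a ready-made ultra-convex subgroup $F(t)$ to amalgamate along---a feature your single-stable-letter HNN does not automatically provide.
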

 
We give a brief overview of how we prove Theorem ~\ref{thm1}. In Section~\ref{s4}, we first construct $2$-dimensional cubulated hyperbolic groups containing ultra-convex (see Definition~\ref{defn:ultra}) subgroups and exponentially distorted finite rank free subgroups. We refer to them as \emph{building blocks} and our construction parallels that of [\cite{BBD}]. In Section~\ref{s5}, we amalgamate these groups along the ultra-convex subgroup of one with the exponentially distorted subgroup of the other. This kind of amalgamation ensures the group remains CAT($0$), and one expects that the distortion functions compose. To introduce polynomial distortion, we make use of some virtually special groups [\cite{BS}, Theorem A] containing polynomially distorted finite rank free subgroups. These virtually special groups in \cite{BS} are cubulated and we move in a direction to amalgamate them with our groups in Section~\ref{s5}, so that the final group remains cubulated. However, to make these cube complex structures compatible with our desired amalgamation, we introduce \emph{new building blocks} ($2$-vertex complexes) in Section~\ref{s6} [see Remark~\ref{why}]. Finally, we prove Theorem~\ref{thm1} in Section~\ref{s7}. 
 
\begin{thm}\label{thm2}
There exists a virtually special group $G$ containing a finite rank free subgroup $F$ such that the distortion function of $F$ in $G$ is bigger than any iterated exponential.
\end{thm}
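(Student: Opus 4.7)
The plan is to realize an infinite iteration of the amalgamation underlying Theorem~\ref{thm1} within a single finitely generated virtually special group, by looping the construction back on itself via an HNN extension. Throughout, I write $G_n$ for the $n$-fold amalgamated group constructed in the proof of Theorem~\ref{thm1} (specialized to $m=1$), so that $G_n$ contains a free subgroup $F_n$ of some fixed rank $r$ with distortion $\succeq \exp^n(x)$ in $G_n$.

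As a first step, I would observe that the building blocks of Section~\ref{s4} can be chosen so that their ultra-convex subgroup $U$ and exponentially distorted free subgroup $F$ are isomorphic as abstract free groups. Fix such a block $B$, and fix an isomorphism $\phi\colon U \to F$. Then I would form the HNN extension
\[
G \;=\; \bigl\langle B,\, t \,\bigm|\, tut^{-1}=\phi(u),\ u \in U \bigr\rangle.
\]
The Bass-Serre tree of $G$ has copies of $B$ as vertex groups, and each finite subtree along the $t$-axis realizes an iterated amalgamation of copies of $B$ matching the pattern used to prove Theorem~\ref{thm1}. Consequently, for every $n$, the subgroup of $G$ generated by the vertex stabilizers in a depth-$n$ subtree is isomorphic to $G_n$, and therefore contains a copy of $F_n$.

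Next, I would extract a single finitely generated free subgroup $F^* \leq G$ whose distortion in $G$ dominates every $\exp^k$. Since $G$ fits into a short exact sequence $1 \to \tilde{G} \to G \to \mathbb{Z} \to 1$ with $\tilde{G} = \varinjlim_n G_n$, the conjugation action of $t$ shifts the copies of $G_n$ inside $\tilde{G}$, so all of the witnesses for iterated exponential distortion sit in controlled $t$-translates of the original $F \leq B$. A natural candidate is $F^* = \langle F,\, tFt^{-1}\rangle$ (or a similar $2r$-generator subgroup containing enough $t$-translates of a basis of $F$). Distortion bounds then transfer from $G_n$ to $G$: for every $n$, one obtains a word $w_n \in F^*$ whose $G$-length is polynomial in $n$ but whose $F^*$-length is at least $\exp^n(C)$ for some absolute constant $C$, so the distortion function of $F^*$ in $G$ exceeds $\exp^k(x)$ for every fixed $k$.

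Finally, I would verify that $G$ is virtually special by appealing to the combination theorems of Haglund--Wise and Wise for HNN extensions along malnormal quasi-convex subgroups: ultra-convexity of $U$ in $B$ supplies quasi-convexity, and a finite-index refinement of $B$ makes the edge subgroup malnormal. The main obstacle will be precisely this verification, because the HNN identification $\phi$ glues a quasi-convex subgroup ($U$-side) to a highly distorted one ($F$-side), an asymmetry not directly covered by standard combination theorems. To overcome this, I would emulate the strategy of Section~\ref{s6} and introduce an intermediate ``interface'' cube complex (analogous to the $2$-vertex complexes constructed there), so that the HNN gluing factors into two simpler identifications, each amenable to direct inspection of the Haglund--Wise hyperplane conditions (no self-intersection, no self-osculation, no inter-osculation), from which virtual specialness of $G$ follows.
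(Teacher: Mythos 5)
Your central idea --- realize an unbounded iteration of the Section~\ref{s5} amalgamation inside a single finitely generated group by looping the exponentially distorted side back to the ultra-convex side via a new stable letter --- is exactly what the paper does. However, two concrete problems in your execution would need to be fixed, and the paper avoids both by a more direct route.

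First, your opening step requires a building block whose ultra-convex subgroup $U$ and exponentially distorted subgroup $F$ are isomorphic as abstract free groups, i.e.\ have equal rank. In the Section~\ref{s4} blocks $P_n$, $F(a)$ has rank $9n$ and $F(t)$ has rank $n$, and the no-two-letter-repetition constraint forbids equalizing them: with $n$ conjugators and $n$ base letters each relator would have to be a single letter. An HNN extension does not actually need the two associated subgroups to exhaust $U$ and $F$; it only needs two isomorphic subgroups. The paper exploits exactly this flexibility, taking a block with $m$ base letters and $n$ conjugators subject only to $9mn\le m^2$ and $9m\le n^2$ (e.g.\ $m=9^3$, $n=9^2$), and then letting a single stable letter $s$ carry the rank-$m$ free group $\langle a_1,\dots,a_m\rangle$ isomorphically onto a rank-$m$ subgroup $\langle W_1,\dots,W_m\rangle$ of the ultra-convex free group $\langle t_1,\dots,t_n\rangle$.

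Second, and more seriously, your plan to establish virtual specialness via Haglund--Wise-type combination theorems for HNN extensions cannot succeed as stated: those theorems require the edge subgroup to be quasi-convex on both sides, but on the $F$-side the edge subgroup is exponentially distorted and hence badly non-quasi-convex. You correctly flag this asymmetry as ``the main obstacle,'' but the proposed fix (an interface complex \`a la Section~\ref{s6}) is vague and there is no indication it repairs the quasi-convexity failure. The paper sidesteps the issue entirely: it writes down the presentation complex $Z$ of the HNN extension explicitly, decomposes each $2$-cell into a strip of right-angled Euclidean squares, verifies the large link condition directly (using the no-two-letter-repetition property), shows $\widetilde{Z}$ contains no flat plane so $H$ is hyperbolic by Theorem~\ref{flt}, and then applies Agol's Theorem~\ref{Agol's theorem}. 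No combination theorem is invoked for the HNN step at all. This also renders your ``extract $F^* = \langle F, tFt^{-1}\rangle$'' step unnecessary: the distorted subgroup in the paper is simply $F = \langle a_1,\dots,a_m\rangle$, and its distortion is read off as in \cite{BBD}, Theorem 4.1. Your proposal has the right architecture but, as written, would stall at the specialness verification; the paper's explicit-cubulation-plus-Agol strategy is the missing ingredient.
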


In Section~\ref{s8}, we prove Theorem~\ref{thm2}. To accomplish this, we do an HNN extension on our initial building blocks in Section~\ref{s4} such that the stable letter sends the exponentially distorted subgroup to a subgroup of the ultra-convex subgroup. This HNN extension follows the construction of [\cite{BBD}, Theorem 4.1].

\section{Acknowledgements}
The authors would like to thank Noel Brady for his guidance and many helpful conversations on this project. We also thank Max Forester for his comments on an earlier draft of the article. The first author acknowledges support from the 2025 Summer Research Fellowship at the University of Oklahoma. The second author acknowledges support from NSF grant DMS-2405264 during Spring 2025.

\section{Background and Definitions}


In the following section, we present the necessary prerequisites for the upcoming sections. We begin with some definitions associated with cube complexes, all of which can be found in \cite{bridh}.


A \textit{cube complex} is a space constructed by gluing together Euclidean unit cubes $[0,1]^n$ of various dimensions via isometry along their faces. The \textit{link} of a vertex $v$ in a cube complex $X$, denoted by $\Lk(v, X)$, is a simplicial complex whose vertices are edges in $X$ incident to $v$. A set of vertices in $\Lk(v, X)$ spans a simplex if the corresponding edges lie on a face of a cube in $X$ containing $v$. By Gromov's link condition \cite{bridh}, a cube complex $X$ is \textit{nonpositively curved} if and only if the link of each vertex is a CAT$(1)$ simplicial complex. In particular, a  $2$-dimensional cube complex $X$ is nonpositively curved if and only if $\Lk(v, X)$ contains no loop of length less than $2\pi$ for every vertex $v$; this is referred to as the \textit{large link condition}.  
\\We refer the reader to \cite{spcl}  for the definition of special cube complex. Note that the original definition of \emph{special cube complexes} in \cite{spcl} does not require them to be compact. However, in this article, we will only be dealing with compact special cube complexes.

A nonpositively curved cube complex is called \textit{$\CAT(0)$ cube complex } if it is simply connected. We call a group $G$ to be \emph{cubulated} if it acts properly cocompactly on a CAT($0$)
 cube complex.


We now state a version of Gromov's Flat Plane Theorem, which we will use to show certain groups are hyperbolic.

\begin{thm}[Flat Plane Theorem, \cite{bridh}, Theorem 3.1]\label{flt} If a group $\Gamma$ acts properly and cocompactly by isometries on a $\CAT(0)$ space $X$, then $\Gamma$ is word hyperbolic if and only if $X$ does not contain an isometrically embedded copy of the Euclidean plane.
    
\end{thm}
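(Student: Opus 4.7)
The plan is to reduce the statement to a purely metric condition on $X$ and then tackle each direction.

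Since $\Gamma$ acts properly and cocompactly by isometries on the proper geodesic space $X$, the Milnor-Schwarz lemma gives that $\Gamma$ is quasi-isometric to $X$, and Gromov-hyperbolicity of a geodesic metric space is a quasi-isometry invariant. Hence it suffices to prove: $X$ is Gromov-hyperbolic if and only if $X$ contains no isometrically embedded copy of $\EE^2$. The easy direction is immediate: an isometric copy of $\EE^2$ in $X$ contains arbitrarily large equilateral triangles, so the thin-triangles condition fails in $X$ and $X$ cannot be $\delta$-hyperbolic for any $\delta$.

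For the hard direction, assume $X$ is not Gromov-hyperbolic. Then for each $n \in \NN$ we can find a geodesic triangle $\Delta_n$ in $X$ with a point $p_n$ on one side at distance at least $n$ from the union of the other two sides. By cocompactness, translate each $\Delta_n$ by an element of $\Gamma$ so that $p_n$ lies in a fixed compact set $K \subset X$. Properness together with Arzel\`a-Ascoli lets us extract a subsequential limit of the arc-length-parameterized sides and produce three bi-infinite geodesics in $X$ forming an ``infinite geodesic triangle'' $T_\infty$ passing through some $p_\infty \in K$, with $p_\infty$ at infinite distance from two of the sides.

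The hard part is upgrading $T_\infty$ to an isometrically embedded Euclidean plane. The key tools are the Flat Strip Theorem and Flat Quadrilateral Theorem of $\CAT(0)$ geometry: if a quadrilateral in a $\CAT(0)$ space realizes equality in the comparison inequality, the two geodesic lines containing opposite sides span an isometric flat strip. Convexity of the $\CAT(0)$ metric, together with the saturation of the comparison inequality in the limit at $p_\infty$, produces flat strips of arbitrary width inside each sector of $T_\infty$. A further limiting and translation argument, using the cocompact $\Gamma$-action to keep everything in bounded regions, then glues these strips into a single isometric copy of $\EE^2$ in $X$. The principal obstacle is precisely this upgrade from the asymptotic flatness of the limit triangle $T_\infty$ to honest isometric flatness of an entire plane; that step is the delicate heart of the theorem and requires careful bookkeeping of $\CAT(0)$ equalities along the limiting process.
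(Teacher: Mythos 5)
The paper does not prove this theorem; it states it as a known result and cites Bridson--Haefliger for it (the relevant result there is Theorem III.H.1.5, cited in the paper as ``Theorem 3.1''). So there is no in-paper proof to compare your argument against; what you have written is an independent reconstruction of the proof of a black-boxed result.

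Evaluated on its own terms, your sketch has the right skeleton but a genuine gap. The reduction via the Milnor--Schwarz lemma and quasi-isometry invariance of Gromov hyperbolicity is correct, and the easy direction (an isometric copy of $\EE^2$ contains arbitrarily fat triangles, contradicting $\delta$-thinness) is fine. For the hard direction, you correctly produce arbitrarily fat triangles, translate them into a fixed compact set by cocompactness, and extract a subsequential limit via properness and Arzel\`a--Ascoli. However, the step where you ``glue flat strips into a single isometric copy of $\EE^2$'' is asserted rather than proved, and this is precisely where the content of the theorem lives. One must show that the limiting process saturates the $\CAT(0)$ comparison inequality in a way that forces honest isometrically embedded flat equilateral triangles of every side length (this is Proposition III.H.1.4 in Bridson--Haefliger, proved from the Flat Triangle/Quadrilateral rigidity results and convexity of the $\CAT(0)$ metric), and then run one more cocompactness-and-limit argument to pass from arbitrarily large flat triangles to a flat plane. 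You name the right tools, but as written, the ``delicate heart'' you flag is still a hole: nothing in your sketch rules out, for instance, that the limit configuration $T_\infty$ is only asymptotically flat without bounding any genuine flat region. The proof is incomplete until that step is carried out.
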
 

Finally, the following theorem from \cite{Agol} provides a sufficient condition for a cubulated group to be virtually special.

\begin{thm}\cite[Theroem 1.1]{Agol}\label{Agol's theorem}
    Let $G$ be a word-hyperbolic group acting properly and cocompactly on a $\CAT(0)$ cube complex $X$. Then $G$ has a finite index subgroup $F$ acting specially on $X$. 
\end{thm}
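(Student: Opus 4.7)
The plan is to invoke the Haglund--Wise characterization of virtual specialness via separability of hyperplane stabilizers and then establish the needed separability using Wise's Malnormal Special Quotient Theorem (MSQT) together with a quasiconvex hierarchy on cubulated hyperbolic groups. A compact nonpositively curved cube complex admits a finite special cover precisely when each hyperplane stabilizer of its fundamental group is separable and a suitable collection of double cosets of hyperplane stabilizers is separable as well. So the target reduces to the following statement: every quasiconvex subgroup of $G$ is separable in $G$, since in a hyperbolic group this stronger property implies the required double-coset separability.

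First I would observe that the hyperplane stabilizers are quasiconvex subgroups of $G$. Indeed, hyperplanes in $X$ are convex subcomplexes on which their stabilizers act properly cocompactly, so by hyperbolicity of $G$ each such stabilizer is quasiconvex, and hence itself word hyperbolic.

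Next I would establish quasiconvex separability by induction along a hierarchy. The base cases are virtually free groups and virtually surface groups, for which quasiconvex subgroup separability is classical (Hall, Scott). For the inductive step, one exhibits $G$ as the fundamental group of a finite graph of virtually special hyperbolic groups with quasiconvex almost malnormal edge groups -- a cubical hierarchy provided by the arrangement of hyperplanes in $X$. At each stage MSQT allows one to pass to a virtually special Dehn filling of an edge group, and cubical small cancellation theory verifies that this filling is still cubulated and hyperbolic. Combining the virtually special pieces using Wise's quasiconvex hierarchy theorem then promotes virtual specialness of the constituents to virtual specialness of $G$ itself. Applying the Haglund--Wise criterion yields a finite-index $F \leq G$ acting specially on $X$.

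The main obstacle is setting up and terminating the hierarchy. One must produce a complexity measure on cubulated hyperbolic groups that strictly decreases under each application of MSQT, while simultaneously maintaining the almost malnormality of the edge groups so that MSQT applies at every stage. Controlling the hyperplane arrangement well enough to guarantee both properties -- precisely the technical heart of Agol's argument in \cite{Agol} -- is what makes the induction run.
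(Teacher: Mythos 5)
The paper does not prove this theorem: it is stated as a citation of Theorem~1.1 of Agol's article and is used throughout as a black box (to certify that the cubulated hyperbolic groups constructed in Sections~4--8 are virtually special). There is therefore no ``paper's own proof'' to compare against.

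As an independent summary of how Agol's theorem is actually established, your sketch gathers the right cast of characters---the Haglund--Wise separability criterion, Wise's Malnormal Special Quotient Theorem, quasiconvexity of hyperplane stabilizers, the classical base cases of Hall and Scott---and you correctly flag that setting up and terminating the induction is where the difficulty lies. But the induction is not run the way you describe. Agol does not exhibit $G$ as the fundamental group of a finite graph of virtually special hyperbolic groups along quasiconvex almost-malnormal edge groups and recurse on that decomposition; that is closer to the hypothesis of Wise's quasiconvex hierarchy theorem, which Agol's result is precisely designed to \emph{remove}. The technical engine of Agol's proof is instead the Weak Separation Theorem (the appendix of his paper, joint with Groves and Manning), which supplies the cubulated hyperbolic Dehn fillings, combined with a coloring argument on the wallspace of hyperplanes and a complexity measure adapted to the cube complex. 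Your sketch omits the Weak Separation Theorem entirely, and without it the step ``at each stage MSQT allows one to pass to a virtually special Dehn filling'' has no input: MSQT requires an almost-malnormal quasiconvex family to fill along, and producing such a family compatibly with the cubulation is exactly what the weak separation machinery delivers.
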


We now define the \emph{distortion function} of $H$ in $G$ for two finitely generated groups $H \leq G$.
\begin{defn}\label{dist}(Subgroup distortion)
For a pair of finitely generated groups, $H \leq G$ with word metric $d_H$ and $d_G$, the distortion of $H$ in $G$ is a function $\delta_H^G: \mathbb{N}  \longrightarrow \mathbb{N}$ given by, 
    $$\delta_H^G(n)= \max \{ d_H(1, w):  w \in H, d_G(1, w) \leq n\}.$$
\end{defn}
The distortion function is well-defined up to Lipschitz equivalence ($\simeq$), that is, it is independent of the choice of word metric.

We record some standard facts about Lipschitz equivalence of functions in the following lemma.

\begin{lem}\label{disteq}

(1) For $m ,n > 0$, $x^m \simeq x^n$ iff $m=n$.

(2) For any $c > 1$, $\exp(x) \simeq c^x$.

(3) For $m, n > 0$, $\exp(x^m) \simeq \exp(x^n)$ iff $m=n$.
\end{lem}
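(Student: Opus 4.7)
The plan is to unpack the definition of Lipschitz equivalence (namely: $f \preceq g$ if there exist constants $A, B, C > 0$ with $f(x) \leq A\,g(Bx) + Cx + C$ for all $x$, and $f \simeq g$ if $f \preceq g$ and $g \preceq f$) and then carry out three direct comparisons of growth rates. The central observation I would emphasize is that Lipschitz equivalence tolerates a multiplicative rescaling of the argument and an additive linear term, but no nonlinear reparametrization; this is exactly what separates the easy from the nontrivial cases below.

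For (1), the direction $x^m \preceq x^n$ whenever $m \leq n$ is trivial since $x^m \leq x^n$ for $x \geq 1$, and the case $m = n$ is then clear. For the converse I would argue by contradiction: if $n > m$ and $x^n \preceq x^m$, then $x^n \leq A(Bx)^m + Cx + C = AB^m x^m + Cx + C$ for all $x$. Dividing by $x^m$ and letting $x \to \infty$ forces $x^{n-m}$ to be eventually bounded, which is absurd.

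For (2), the identity $c^x = \exp\bigl((\log c)\,x\bigr)$ reduces everything to a linear rescaling of the argument. Choose an integer $B \geq \max\{\log c,\, 1/\log c\}$; then $c^x = \exp((\log c)x) \leq \exp(Bx)$ and $\exp(x) = c^{x/\log c} \leq c^{Bx}$, giving $c^x \simeq \exp(x)$. In short, since Lipschitz equivalence absorbs linear scalings of the input, changing the base of an exponential does not change its equivalence class.

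For (3), the case $m = n$ and the direction $\exp(x^m) \preceq \exp(x^n)$ for $m \leq n$ are immediate. The nontrivial content is the reverse direction when $n > m$, which I would prove by contradiction and reduction to (1). Suppose $\exp(x^n) \leq A\,\exp((Bx)^m) + Cx + C$. Since $A\,\exp((Bx)^m) \to \infty$ as $x \to \infty$, for $x$ sufficiently large the linear error $Cx + C$ is dominated by $A\,\exp((Bx)^m)$, so $\exp(x^n) \leq 2A\,\exp(B^m x^m)$. Taking logarithms yields $x^n \leq B^m x^m + \log(2A)$, which contradicts (1). The main obstacle — really the only step that is not formal — is this absorption of the additive linear term before taking logs; once it is handled, (3) drops out of the polynomial case in (1).
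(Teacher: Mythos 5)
The paper offers no proof of Lemma~\ref{disteq}; it is stated as a record of standard facts, so there is no argument in the text against which to compare yours. Your proof is the natural and correct one: unpack the definition, settle the easy directions by monotonicity, argue the hard directions by contradiction, and, for part~(3), absorb the additive linear error into the dominant exponential before taking logarithms so as to reduce to a polynomial comparison.

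One small caveat is worth recording. In part~(1), your division by $x^m$ only bounds the right-hand side $AB^m + Cx^{1-m} + Cx^{-m}$ when $m \geq 1$; for $0 < m < 1$ the term $Cx^{1-m}$ is unbounded and the argument as written does not close. This is not really your error, because the statement itself fails in that regime: for $0 < m < n \leq 1$ one has $x^m \simeq x \simeq x^n$, since the relation $\preceq$ tolerates an additive linear term. The paper only ever invokes the lemma for integer exponents $m \geq 1$, where your argument is complete, but a careful write-up should either restrict to $m, n \geq 1$ or handle the subcase $m < 1 < n$ by dividing by $x$ rather than $x^m$. For part~(3), by contrast, taking logarithms leaves only a constant error $\log(2A)$ rather than a linear one, so the reduction goes through for all $m, n > 0$; your appeal to ``(1)'' there is slightly informal, since what you actually use is the cleaner inequality $x^n \leq B^m x^m + \log(2A)$, which forces $n \leq m$ directly without the complication noted above.
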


\section{Building Blocks}\label{s4}

In this section, we define a class of groups called the building blocks and show that they are virtually special. Moreover, we demonstrate that they contain finitely generated free subgroups with exponential distortion.

\begin{defn}\label{block1} ($2$-dimensional cubulated hyperbolic building block $P_n$)
For each positive integer $n$, we define a building block group $$P_n \;=\; \langle a_1, \ldots, a_{9n}, t_1, \ldots, t_n \;|\; t_ia_jt_i^{-1} = W_{ij}, 1 \leq i \leq n, 1 \leq j \leq 9n \rangle$$ where $\{W_{ij}\}$ is a collection of positive words of length $9$ in $a_1, \ldots, a_{9n}$ such that each two-letter word $a_ka_l$ appears at most once among the $W_{ij}$'s. To satisfy the above property, we choose $W_{ij}$ to be the consecutive subwords of the following word, defined by Dani Wise in \cite{MR1423338}.
\end{defn}

\begin{defn}\label{wiselong} 
(Wise's long word with no two-letter repetitions) 
For a set $\{a_1, a_2, \cdots , a_m\}$, define the positive word of length $m^2$,
$$\Sigma(a_1, \ldots, a_m)= (a_1a_1a_2a_1a_3 \cdots a_1 a_m)(a_2 a_2 a_3 a_2 a_4 \cdots a_2 a_m) \cdots \cdots (a_{m-1} a_{m-1} a_m) a_m.
$$
Note that, each two-letter subword $a_k a_l$  appears at most once in $\Sigma(a_1, \ldots, a_m)$.
\end{defn}

Next, we define the notion of ultra-convex subcomplexes. This notion is crucial throughout our work for showing when the amalgamation of certain non-positively curved cube complexes remains non-positively curved.

\begin{defn} (Ultra-convex)\label{defn:ultra}
\cite{brady2022superexponentialdehnfunctionsinside}
Let $X$ be a nonpositively curved, piecewise Euclidean $2$-complex, and let $Y$ be a $1$-dimensional subcomplex in $X$.
 Then $Y$ is said to be \textit{ultra-convex} in $X$ if, for every vertex $v \in Y$, any two points in  $\Lk(v, Y)$ are at least $2\pi$  distance apart in $\Lk(v, X)$. This condition is equivalent to the statement that the free group $\pi_1(Y)$ injects into $\pi_1(X)$.
\end{defn}


Here is the main result of this section. In the following proposition, we provide a cubical CAT($0$) hyperbolic structure on the universal cover of the presentation complex $X_n$ of $P_n$. Furthermore, we show that the subgroup $\langle a_1, \cdots a_{9n} \rangle$ is an exponentially distorted free subgroup and the subgroup $\langle t_1, \cdots t_{n} \rangle$ is an ultra-convex convex free subgroup in  $P_n$.

\begin{prop}\label{prop:block1}
Let $n \geq 1$ be an integer. The presentation $2$-complex $X_n$ for $P_n$ can be given a non-positively curved piecewise Euclidean cell complex structure with the following properties:

(1) The $a_j$'s generate a free subgroup $F(a)$ of rank $9n$ whose distortion in $P_n$ is exponential.

(2) The $t_i$'s generate a free subgroup $F(t)$ which is the fundamental group of a rose $R_n$ in $X_n^{(1)}$ such that $R_n$ is ultra-convex in 
$X_n$. 

(3) The universal cover $\widetilde{X_n}$ is hyperbolic.

\end{prop}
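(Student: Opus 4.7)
The plan is to equip $X_n$ with an explicit piecewise Euclidean metric, verify a strict version of the large link condition at the unique vertex $v_0$, and then extract (1)--(3) using a mix of link analysis and an algebraic projection argument. The critical combinatorial input for the link analysis is that every two-letter subword $a_k a_l$ occurs at most once among the $W_{ij}$'s.

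I would first fix the cell structure: realize each relator $t_i a_j t_i^{-1} W_{ij}^{-1}$ as a right-angled Euclidean rectangle whose vertical sides are $t_i$-edges and whose horizontal sides carry $a_j$ (top) and $W_{ij}$ (bottom). After this choice $X_n$ has a single vertex $v_0$, and $\Lk(v_0, X_n)$ is a finite graph whose vertices are the edge germs at $v_0$ and whose edges are either length-$\pi/2$ arcs (one per corner of a 2-cell) or length-$\pi$ arcs (one per interior subdivision vertex on the $W_{ij}$-side of a 2-cell).

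The technical heart is to show that every embedded cycle in $\Lk(v_0, X_n)$ has length strictly greater than $2\pi$ and that any two distinct $a$-germs lie at distance at least $\pi$. A bigon in the link would require either two interior subdivision points contributing the same unordered pair of $a$-germs -- equivalently the same two-letter subword $a_k a_l$ appearing twice among the $W_{ij}$'s -- or two corners linking the same pair of edge germs; both are forbidden by Wise's no-two-letter-repetition property. Longer short cycles are excluded by an analogous case analysis tracking the two-letter subwords encountered. I expect this link computation to be the main obstacle. Once it is in place, the nonpositive curvature of $X_n$ is immediate, the strict girth bound rules out any flat plane in $\widetilde{X_n}$, and the Flat Plane Theorem~\ref{flt} gives (3).

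Part (1)'s injection of $F(a)$ follows because the $a$-subrose is locally convex at $v_0$, hence locally isometrically embedded, hence $\pi_1$-injective, so $F(a)$ is free of rank $9n$. For part (2), a quick algebraic argument suffices: the assignment $t_i \mapsto t_i$, $a_j \mapsto 1$ extends to a homomorphism $\phi: P_n \to F(t)$ since every defining relator lies in the kernel ($W_{ij}$ is an $a$-word, so $\phi(t_i a_j t_i^{-1} W_{ij}^{-1}) = 1$), and $\phi$ restricts to the identity on $F(t)$, so $F(t) \hookrightarrow P_n$ is injective; invoking the equivalence noted in Definition~\ref{defn:ultra} then yields ultra-convexity of $R_n$. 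Finally, the exponential distortion in (1) is witnessed by $w_k = t_i^k a_j t_i^{-k}$: these have $P_n$-length at most $2k+1$, and iterating $t_i a_j t_i^{-1} = W_{ij}$ identifies $w_k$ in $F(a)$ with a positive -- hence freely reduced -- word of length $9^k$. A matching upper bound of order $9^N$ follows by induction on $N = |w|_{P_n}$, pushing each $t$-letter past every $a$-letter via the defining relations and inflating the $a$-length by a factor of at most $9$ per crossing.
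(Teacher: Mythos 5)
Your plan diverges from the paper's in a way that matters, and there are two genuine gaps.

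\textbf{The metric is not well defined.} You propose realizing each relator $t_i a_j t_i^{-1} W_{ij}^{-1}$ as a single Euclidean rectangle with $a_j$ along the top and $W_{ij}$ (subdivided into nine sub-edges) along the bottom. Opposite sides of a rectangle have equal length, so in this 2-cell the edge $a_j$ must have length $9$ while each sub-edge of $W_{ij}$ has length $1$. But every generator $a_l$ appears both in the role of "top edge" (length $9$) in its own relator and as a letter of some $W_{i'j'}$ (length $1$) in other relators, so the edge $a_l$ of $X_n$ would need two incompatible lengths. The paper avoids this by instead decomposing each $2$-cell into five unit Euclidean squares (Figure 1), which makes every edge of $X_n$ a unit edge, keeps the metric coherent, and -- crucially for the downstream Corollary and the appeal to Agol's theorem -- produces a \emph{cube complex} rather than an ad hoc polygonal complex. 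Your rectangle structure, even if it could be reconciled metrically, would not exhibit $P_n$ as a cubulated group.

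\textbf{Ultra-convexity via the ``equivalence'' in Definition~\ref{defn:ultra} is a gap.} You correctly verify that the retraction $P_n \to F(t)$ (killing the $a_j$'s) witnesses $F(t) \hookrightarrow P_n$, and then invoke the sentence in Definition~\ref{defn:ultra} asserting that ultra-convexity is \emph{equivalent} to $\pi_1$-injectivity. That claimed equivalence cannot be taken at face value: ultra-convexity (all pairs of link points at distance $\geq 2\pi$) is strictly stronger than local convexity (distance $\geq \pi$), and local convexity already suffices for $\pi_1$-injectivity. There are many $\pi_1$-injective $1$-subcomplexes that are not ultra-convex. Ultra-convexity of $R_n$ is a geometric statement about the link of $v$ in the chosen cubical structure and must be checked there; the paper handles this (together with the injectivity and distortion claims in (1)) by invoking the link computation of Proposition 2.2 of \cite{BBD}, where the $t$-germs are shown to be pairwise at distance $\geq 2\pi$ in $\Lk(v, X_n)$. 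Your algebraic argument establishes injectivity but says nothing about the $2\pi$ separation, which is the content actually needed both here and in the gluing lemmas used later.

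\textbf{On part (3),} the paper does not establish a blanket strict girth bound $> 2\pi$. Instead it argues: a flat plane in $\widetilde{X_n}$ must be tiled by lifts of the unit squares, so it forces a \emph{simple} length-$2\pi$ loop in $\Lk(v,X_n)$ of one of three specific shapes (Figure~\ref{links}), and each of these three is excluded -- one by the no-two-letter-repetition property, the other two by the positivity of the $W_{ij}$. This is a more targeted argument than ``every cycle has length strictly greater than $2\pi$,'' and it is tied to the cube-complex decomposition (the loop types in Figure~\ref{links} are determined by how the five squares fit together). Your bigon-counting heuristic applies naturally to the rectangle model, which as noted above is not the metric being used; you would need to redo the case analysis for the actual cubical link.

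The retraction argument for injectivity of $F(t)$, the local-convexity argument for injectivity of $F(a)$, and the exponential lower bound via $w_k = t_i^k a_j t_i^{-k}$ are all correct and match the intent of the paper's (cited) argument. The fixable problems are the metric on the $2$-cells and the missing link computation for ultra-convexity.
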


\begin{proof}

On each $2$-cell in the presentation $2$-complex $X_n$ of $P_n$, we give a piecewise Euclidean structure by expressing it as a concatenation of five right-angled Euclidean squares, as shown in Figure\ref{5_squres}. 
We verify the large link condition on $X_n$ to conclude that $P_n$ is a CAT($0$) group. The fact that $W_{ij}$'s are positive words in $a_j$'s with no two-letter repetitions ensures the large link condition on $X_n$ as shown in \cite{MR1423338}. Although in Theorem 1.3, \cite{MR1423338} demonstrates the large link condition for a concatenation of five right-angled hyperbolic pentagons, the arguments only use the no repetition of two-letter property of $W_{ij}$'s. Hence, the same arguments follow here.

\begin{figure}[h]
    \centering
    \includegraphics[width=0.5\linewidth]{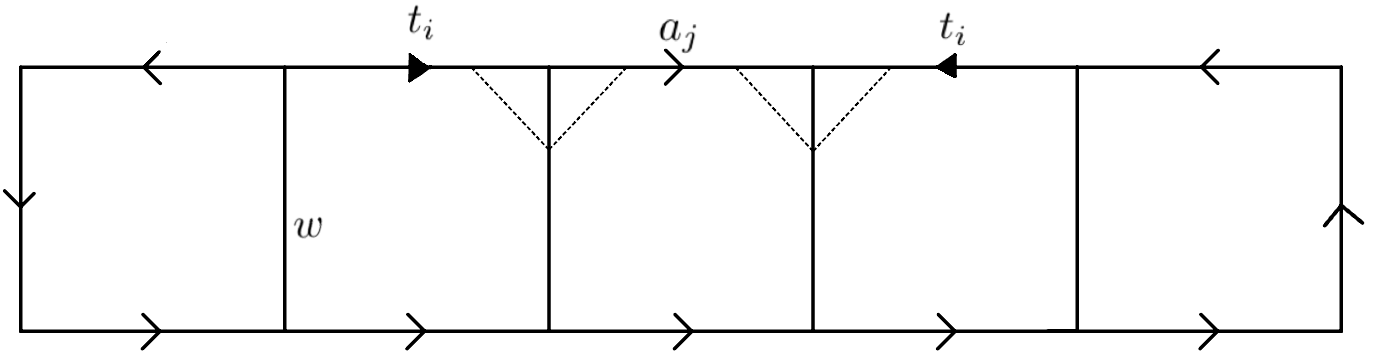}
    \caption{Cell decomposition of $X_n$ into Euclidean squres.}
    \label{5_squres}
\end{figure}

The arguments presented in Proposition 2.2 of \cite{BBD} can be applied here directly to establish (1) and (2). We now prove that $\widetilde{X_n}$ contains no subspace isometric to $\mathds{E}^2$. By the Flat Plane Theorem~\ref{flt}, it follows that the group $P_n$ is hyperbolic.

Denote the unique $0$-cell of $X_n$ by 
$v$. Suppose $\widetilde{X_n}$ contains a subspace isometric to $\mathds{E}^2$. Then any $2$-cell in this subspace must be a lift of a right-angled Euclidean square in $X_n$, as constructed in Figure \ref{5_squres}. This would imply that $\Lk(v,X_n)$ contains a simple loop of length $2\pi$, containing a path with endpoints $a_i^+$ and  $a_j^-$ for some $i$ and $j$, as in Figure~\ref{links}($a$). Due to the way the $2$-cells are cubulated using right-angled Euclidean squares, such a path can be completed to a simple loop of length $2\pi$ in only three possible ways, shown in cases ($b$), ($c$), and ($d$) in Figure~\ref{links}. The no two-letter repetition property of $W_{ij}$ rules out case $(b)$. Furthermore, two vertices of the form $a_i^+$ and $a_l^+$ with $l \neq i$ (or of the form  $a_i^-$ and $a_l^-$) cannot be connected by a single edge in $\Lk(v, X_n)$, since all words in $W_{ij}$ are positive. This eliminates cases ($c$) and ($d$) as well. Therefore, no such loop of length $2\pi$ exists in the link, proving statement (3). It follows from the Flat Plane Theorem that $P_n$ is a hyperbolic group.

\end{proof}

\begin{figure}[h]

\centering
\captionsetup[sub]{font=footnotesize}%
  \subcaptionbox{ \label{fig3:a}}{\includegraphics[width=1.3in]{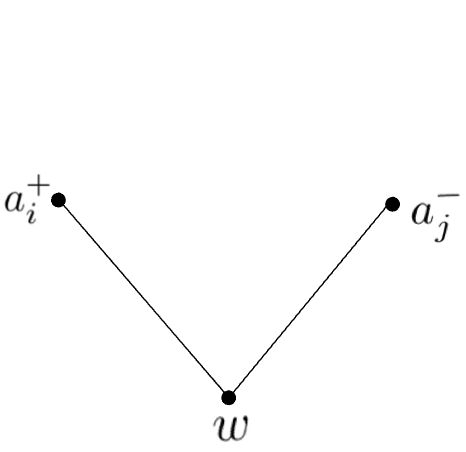}}\hspace{1em}%
  \subcaptionbox{\label{fig3:b}}{\includegraphics[width=1.3in]{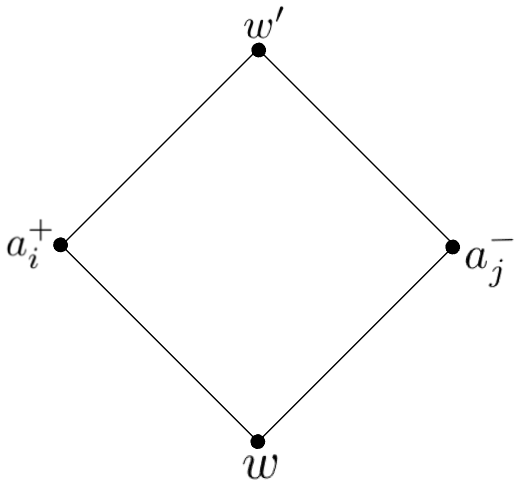}}\hspace{.5em}%
  \subcaptionbox{\label{fig3:c}}{\includegraphics[width=1.3in]{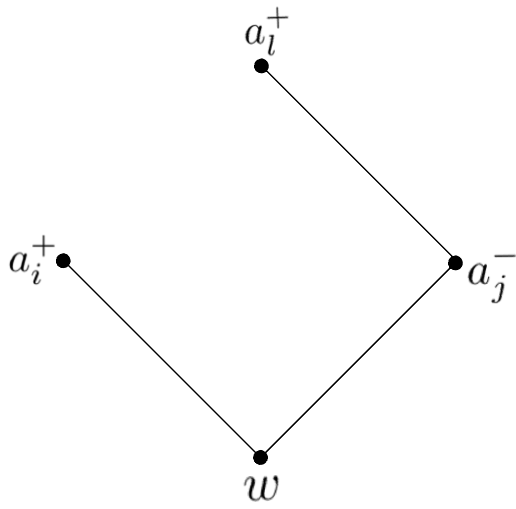}} \hspace{.5em}%
  \subcaptionbox{\label{fig3:d}}{\includegraphics[width=1.3in]{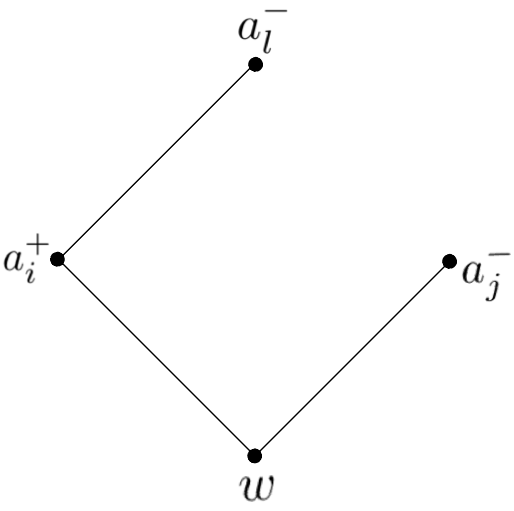}}
  \vspace{.10em}%
\subcaptionbox{ \label{fig3:a'}}{\includegraphics[width=1.3in]{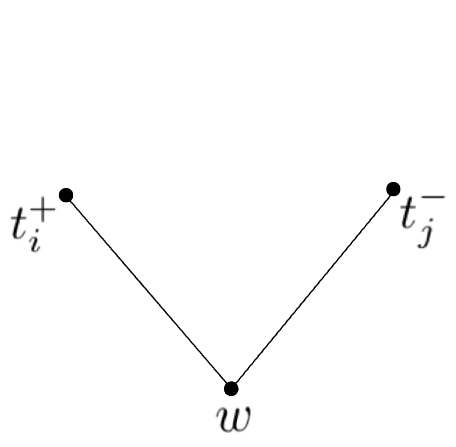}}\hspace{1em}%
\subcaptionbox{ \label{fig3:b'}}{\includegraphics[width=1.3in]{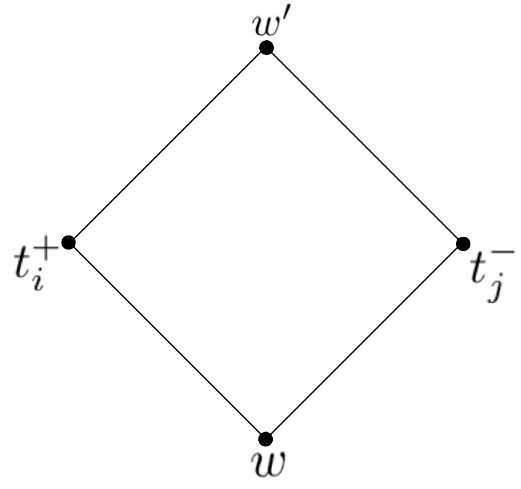}}\hspace{1em}%
\subcaptionbox{ \label{fig3:c'}}{\includegraphics[width=1.3in]{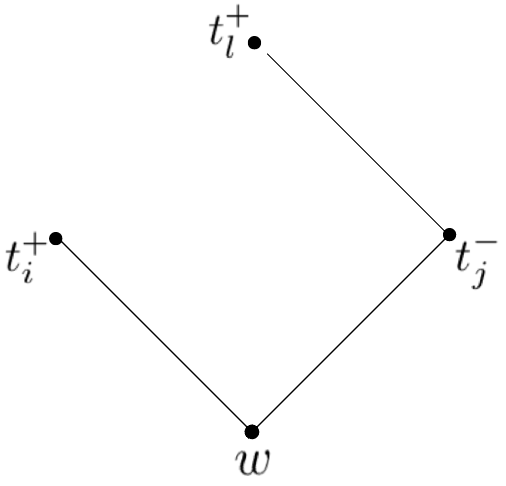}}\hspace{1em}%
\subcaptionbox{ \label{fig3:d'}}{\includegraphics[width=1.3in]{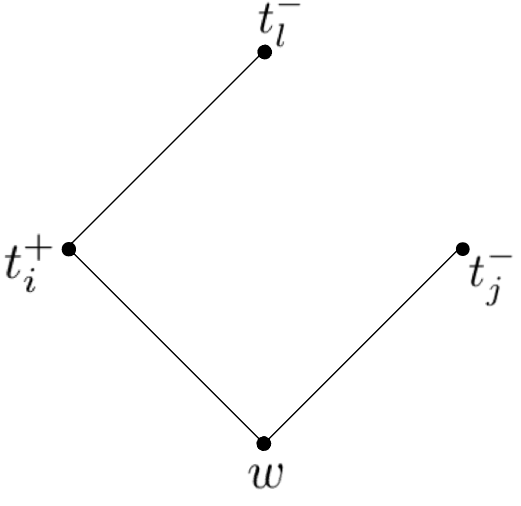}}\hspace{1em}%
   \caption{Paths in $\Lk(v, X_n)$ or $\Lk(v, Z)$. Here, $w$ and $w'$ denote arbitrary unlabeled edges  in $X_n$ and $Z$.}%
  \label{links}%
\end{figure}

\begin{cor}
$P_n$ is a virtually special group containing a free group of rank $9n$ with exponential distortion.
\end{cor}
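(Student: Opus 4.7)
The plan is to assemble the corollary directly from Proposition~\ref{prop:block1} together with Agol's Theorem~\ref{Agol's theorem}. By Proposition~\ref{prop:block1}(3), the universal cover $\widetilde{X_n}$ of the presentation $2$-complex is a CAT($0$) cube complex on which $P_n$ acts properly and cocompactly, and this CAT($0$) cube complex is hyperbolic. Thus $P_n$ is a word-hyperbolic group acting properly and cocompactly on a CAT($0$) cube complex, so Theorem~\ref{Agol's theorem} supplies a finite-index subgroup of $P_n$ acting specially on $\widetilde{X_n}$, which shows that $P_n$ is virtually special.

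For the subgroup, I would simply invoke Proposition~\ref{prop:block1}(1): the elements $a_1,\dots,a_{9n}$ generate a free subgroup $F(a)$ of rank $9n$ whose distortion function in $P_n$ is exponential. Combining the two assertions completes the corollary.

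There is no real obstacle here; the content is genuinely contained in the preceding proposition. The only point to flag explicitly is that Proposition~\ref{prop:block1} gives both the CAT($0$) cubical structure on $\widetilde{X_n}$ (so that Agol's criterion applies) and the hyperbolicity (verified through the Flat Plane Theorem~\ref{flt} in part (3) of the proposition), which are exactly the two hypotheses of Theorem~\ref{Agol's theorem}. So the proof is essentially a one-line citation of Proposition~\ref{prop:block1} followed by one application of Theorem~\ref{Agol's theorem}.
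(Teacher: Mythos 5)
Your proof is correct and follows exactly the same route as the paper: cite Proposition~\ref{prop:block1} for the cubulated hyperbolic structure and the exponentially distorted free subgroup $F(a)$ of rank $9n$, then apply Agol's Theorem~\ref{Agol's theorem} to conclude virtual specialness. (The paper's own proof of this corollary writes $F(c)$ where it means $F(a)$; you have the correct notation.)
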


\begin{proof}
In Proposition~\ref{prop:block1}, we showed that $P_n$ is a cubulated hyperbolic group containing the free group $F(c)$ of rank $9n$ with exponential distortion. By Theorem \ref{Agol's theorem}, $P_n$ is virtually special.
\end{proof}

\section{Iterated Exponential Distortion}\label{s5}

In this section we construct virtually special groups containing finite rank free groups with arbitrary iterated exponential distortion functions. To do so, we amalgamate the groups $P_n$ defined in Section~\ref{s4} by identifying the exponentially distorted subgroup of one with the ultra-convex subgroup of the other. We first prove the groups are cubulated and hyperbolic and conclude virtual specialness by Theorem~\ref{Agol's theorem}. We begin with two results which will be useful for proving hyperbolicity.\\

Using results of Bowditch, Dahmani, and Osin \cite[Theorem 15.4]{DARI} , Dani and Riley proved a combination theorem for hyperbolicity of amalgams $\Gamma = A\ast_CB$. In this article, we will be using their theorem to prove that certain amalgamated groups are hyperbolic. In the interests of self-containment, we record the proof of their theorem here.

\begin{defn}
    A subgroup $H\leq G$ is called \textit{malnormal} if for all $g \in G \backslash H$,
    $H \cap gHg^{-1} = \{e\}$.
\end{defn}

\begin{thm}\label{amalhyp}\cite[Theorem 15.4]{DARI} (Hyperbolicity of amalgams) 
If a finitely generated group $C$ is a subgroup of two hyperbolic groups $A$ and $B$, and $C$ is quasi-convex and malnormal in $A$, then $$\Gamma = A \ast_C B$$ is hyperbolic.

\end{thm}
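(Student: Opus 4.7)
The plan is to route the proof through the theory of relatively hyperbolic groups and combine three classical results. First, since $C$ is quasi-convex and malnormal in the hyperbolic group $A$, I would invoke Bowditch's theorem on peripheral structures for hyperbolic groups to conclude that $A$ is hyperbolic relative to the collection $\{C\}$. The quasi-convexity guarantees that $C$ is itself a hyperbolic group, and the malnormality rules out the accidental parabolics that would obstruct a relatively hyperbolic structure.

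Next, I would apply a combination theorem of Dahmani for amalgams over peripheral subgroups of relatively hyperbolic groups. The form I need says roughly: if $G$ is hyperbolic relative to $\{P\}$ and $H$ is any group containing $P$ as a subgroup, then $G \ast_P H$ is hyperbolic relative to $\{H\}$. Taking $G = A$, $P = C$, and $H = B$, this yields that $\Gamma = A \ast_C B$ is hyperbolic relative to the single peripheral $\{B\}$.

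Finally, to pass from relative hyperbolicity back to genuine word-hyperbolicity, I would invoke Osin's theorem (also present in the Drutu--Sapir framework) that a group which is hyperbolic relative to a finite collection of \emph{hyperbolic} subgroups is itself hyperbolic. Since $B$ is hyperbolic by hypothesis, this delivers the conclusion that $\Gamma$ is hyperbolic.

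The main obstacle I anticipate is checking that each cited theorem applies with the exact hypotheses given — in particular, verifying that Dahmani's combination theorem can be used without any additional quasi-convexity or almost-malnormality assumption on $C$ inside $B$ beyond the embedding $C \hookrightarrow B$, and selecting the precise formulations of Bowditch's and Osin's results. Once the peripheral structures are correctly aligned, the three-step chain (induce relative hyperbolicity on $A$, combine over $C$, then absorb the hyperbolic peripheral $B$) produces the result with no further combinatorial work.
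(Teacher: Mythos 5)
Your proposal is correct and follows exactly the same three-step chain as the paper's proof: the paper also uses Bowditch's characterization to make $A$ hyperbolic relative to $\{C\}$, then Dahmani's combination theorem (Theorem 0.1(2)) to make $\Gamma$ hyperbolic relative to $\{B\}$, and finally Osin's result that relative hyperbolicity with respect to a hyperbolic peripheral implies hyperbolicity. The one worry you flagged — whether Dahmani's theorem needs extra hypotheses on $C$ inside $B$ — is resolved in your favor, since Dahmani's acylindrical/peripheral combination result only requires $C$ to be a full peripheral of $A$ and an arbitrary subgroup of $B$.
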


\begin{proof}
The group $C$ is a finitely generated, quasi-convex, and malnormal subgroup of the hyperbolic group $A$. From [\cite{MR2922380}, Theorem 7.11], we get $A$ is a relatively hyperbolic group with respect to $C$. Using  Theorem 0.1(2) from \cite{MR2026551}, $\Gamma$ is relatively hyperbolic group with respect to $B$. Finally, since a relatively hyperbolic with respect to a hyperbolic group is itself hyperbolic \cite[Corollary 2.41]{MR2182268}, we get that $\Gamma$ is a hyperbolic group. 

\end{proof}

We will be using the following lemma repeatedly to prove that certain subgroups in our constructed hyperbolic groups are malnormal. 

\begin{lem}\label{retract}
Let G be a torsion-free hyperbolic group. Then every retract of $G$ is malnormal.
\end{lem}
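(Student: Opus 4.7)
The plan is to directly verify the malnormality condition by producing a contradiction. Let $H \leq G$ be a retract with retraction $r \colon G \to H$, and suppose toward contradiction that there exist $g \in G \setminus H$ and a nontrivial element $h \in H \cap gHg^{-1}$. Write $h = g h' g^{-1}$ with $h' \in H$; since $h \neq e$ we have $h' \neq e$ as well. Applying $r$ and using that $r$ fixes $H$ pointwise gives
\[
h \;=\; r(h) \;=\; r(g)\, h'\, r(g)^{-1}.
\]
Combining the two expressions for $h$, the element $u := r(g)^{-1} g$ commutes with $h'$.

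The next step is to exploit the torsion-free hyperbolic hypothesis through its structural consequences for centralizers. In a torsion-free hyperbolic group, the centralizer of any nontrivial element is infinite cyclic, so there exists $c \in G$ with $C_G(h') = \langle c \rangle$. Thus we may write $h' = c^m$ with $m \neq 0$ and $u = c^k$ for some integers $m,k$. Since $h' \in H$, applying $r$ gives $r(c)^m = r(c^m) = c^m$.

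Now I would invoke the uniqueness of roots in torsion-free hyperbolic groups: if $x^m = y^m$ with $m \neq 0$, then $x$ and $y$ lie in a common infinite cyclic subgroup (they both centralize $x^m$, whose centralizer is cyclic), and equality of powers in $\mathbb{Z}$ forces $x = y$. Applying this to $r(c)^m = c^m$ yields $r(c) = c$, hence $c \in H$. But then $c^k = u = r(g)^{-1} g \in H$ combined with $r(g) \in H$ gives $g \in H$, contradicting $g \in G \setminus H$.

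The argument is short and the main conceptual step is the invocation of the two facts about torsion-free hyperbolic groups (cyclic centralizers and unique roots); neither should be an obstacle since both are standard. The delicate point to be careful about is the initial reduction: one must extract $h'\neq e$ from $h \neq e$ (immediate from $h = gh'g^{-1}$) so that $C_G(h')$ is guaranteed cyclic. The edge cases $H = \{e\}$ and $H = G$ are malnormal trivially and can be dispensed with in one line.
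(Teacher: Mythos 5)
Your proof is correct and follows essentially the same approach as the paper's: both exploit the retraction together with the facts that centralizers of nontrivial elements in a torsion-free hyperbolic group are infinite cyclic and that roots are unique. Your version is in fact a bit more careful in identifying the generator $c$ of $C_G(h')$ and deducing $c\in H$ before concluding $u\in H$, whereas the paper's phrasing ``is a power of $h$'' elides this step.
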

\begin{proof}
    Suppose $H$ is a retract of $G$, and let $\phi: G \longrightarrow H$ be a group homomorphism such that $\phi(h)=h$ for all $h \in H$. We claim that if $g^n \in H$, then $g \in H$. Since $g^n \in H$, we have $\phi(g^n)=g^n$. Moreover, $\phi(g)$ commutes with $\phi(g^n)=g^n$. Since $G$ is a torsion-free hyperbolic group, this is possible only if $\phi(g)$ is also a power of $g$. Using again the relation $\phi(g)^n=g^n$ and the fact that $G$ is torsion-free, we conclude that $\phi(g)=g$. Therefore, $g \in H$.
    \\
    Next, suppose $H \cap gHg^{-1} \neq \{e\}$. Then $ghg^{-1}=h_1$ for some nontrivial $h, h_1 \in H$. Applying $\phi$, we obtain  $$\phi(g)h\phi(g^{-1})=\phi(g)\phi(h)\phi(g^{-1})=\phi(h_1)=h_1=ghg^{-1}.$$ Thus, $$g^{-1}\phi(g)h = h g^{-1} \phi(g)$$ and hence $g^{-1}\phi(g)$ lies in the centralizer of $h$. Again, since $G$ is a torsion-free hyperbolic group, this means that $g^{-1} \phi(g)$ is a power of $h$, and therefore an element of $H$. Applying $\phi$ again, we see that
    $$g^{-1} \phi(g)= \phi(g^{-1}) \phi^2(g)= \phi(g^{-1}) \phi(g),$$  which implies that $\phi(g) =g$. Therefore, $g \in H$.
   
\end{proof} 

The following proposition is the main result of this section.

\begin{prop}\label{itrexp}
For any integer $k > 0$, there exists a $2$-dimensional cubulated hyperbolic group $H_k$ containing a free subgroup $F$ of rank $9^k$, such that the distorsion function $\delta_F^{H_k}$ satisfies $\delta_F^{H_k} \simeq f^k(x)$, where $f(x)=9^x$.
\end{prop}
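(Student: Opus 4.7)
The plan is to build $H_k$ inductively by amalgamating the building blocks $P_n$ of Section~\ref{s4} along ultra-convex subgroups so that distortions compose. Set $H_1 := P_1$, whose free subgroup $F(a)$ has rank $9$ and distortion $f(x)=9^x$ by Proposition~\ref{prop:block1}. For $k \geq 2$, define
\[H_k \;:=\; H_{k-1} \ast_C P_{9^{k-1}},\]
where $C$ identifies the ultra-convex rank-$9^{k-1}$ subgroup $F(t) \subset P_{9^{k-1}}$ with the inductively produced distorted rank-$9^{k-1}$ free subgroup $F' \subset H_{k-1}$. The new distorted subgroup will then be $F := F(a) \subset P_{9^{k-1}} \subset H_k$, of rank $9 \cdot 9^{k-1} = 9^k$.

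First I would verify that $H_k$ is $2$-dimensional and nonpositively curved (hence cubulated) by checking the link condition at the unique $0$-cell of the amalgamated presentation complex $X_{H_{k-1}} \cup_R X_{P_{9^{k-1}}}$, where $R$ is the rose representing $C$. Any would-be loop of length less than $2\pi$ in the combined link either stays on one side (impossible since each side is CAT$(1)$) or crosses $\Lk(v,R)$; in the latter case the ultra-convexity of $F(t)$ in $P_{9^{k-1}}$ forces any arc of $\Lk(v, X_{P_{9^{k-1}}})$ between two points of $\Lk(v,R)$ to have length at least $2\pi$, ruling it out. Hyperbolicity of $H_k$ follows by induction from Theorem~\ref{amalhyp}: the subgroup $C = F(t) \subset P_{9^{k-1}}$ is a retract (via $a_j \mapsto 1$, $t_i \mapsto t_i$, which respects the relations $t_i a_j t_i^{-1} = W_{ij}$ since $W_{ij} \in F(a) \mapsto 1$), so Lemma~\ref{retract} gives malnormality in the torsion-free hyperbolic group $P_{9^{k-1}}$, and it is quasi-convex since it is undistorted in a hyperbolic group.

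For the distortion, the lower bound $\delta_F^{H_k} \succcurlyeq f^k$ comes from a conjugation trick. By induction, pick $u \in F' \subset H_{k-1}$ with $|u|_{H_{k-1}} \le N$ and $|u|_{F'} \succcurlyeq f^{k-1}(N)$. Under the identification $F' = F(t) \subset P_{9^{k-1}}$, since $F(t)$ is undistorted in $P_{9^{k-1}}$, $u$ has $P_{9^{k-1}}$-length $\simeq f^{k-1}(N)$. The HNN-type relations $t_i a_j t_i^{-1} = W_{ij}$ then force $u a_1 u^{-1} \in F$ to have $F$-length $\simeq 9^{f^{k-1}(N)} = f^k(N)$, while its $H_k$-length is at most $2N+1$. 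For the upper bound $\delta_F^{H_k} \preccurlyeq f^k$, I would first show $\delta_{P_{9^{k-1}}}^{H_k} \preccurlyeq f^{k-1}$: any $b \in P_{9^{k-1}}$ with $|b|_{H_k} = m$ admits an alternating syllable expression in $H_{k-1} \ast_C P_{9^{k-1}}$, and the amalgamated normal form forces every $H_{k-1}$-syllable of $b$ to lie in $C$; each such syllable contributes at most $f^{k-1}$ of its $H_{k-1}$-length to the $P_{9^{k-1}}$-length, and the sum $\sum f^{k-1}(|h_i|) \le m \cdot f^{k-1}(m) \preccurlyeq f^{k-1}(O(m))$, using that $f^{k-1}$ is at least exponential for $k \ge 2$. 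Composing with $\delta_F^{P_{9^{k-1}}} \simeq f$ gives $\delta_F^{H_k}(n) \preccurlyeq f(f^{k-1}(O(n))) \simeq f^k(n)$.

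The main obstacle will be this normal-form analysis underlying the upper bound: one must carefully invoke Bass--Serre theory for the amalgam to justify that every $H_{k-1}$-syllable of a shortest $H_k$-word for $b \in P_{9^{k-1}}$ can be taken in $C$, and then control the resulting sum of $f^{k-1}$-contributions to obtain the bound $f^{k-1}(O(m))$ rather than something larger. The remaining ingredients (cubulation via ultra-convex gluing, hyperbolicity via Theorem~\ref{amalhyp} and Lemma~\ref{retract}, and the lower bound by conjugation) are essentially immediate from the machinery already developed.
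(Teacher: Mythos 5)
Your construction and overall strategy match the paper's: the same iterated amalgam of building blocks identifying the ultra-convex free subgroup of each $P_{9^i}$ with the exponentially distorted free subgroup of the previous factor, cubulation via ultra-convex gluing of the presentation complexes, and hyperbolicity via the retract--malnormal--quasi-convex chain through Lemma~\ref{retract} and Theorem~\ref{amalhyp}. Where the paper outsources the distortion computation to the analogous argument in \cite{BBD}, you try to supply it directly, and the lower bound as you have written it has a gap.

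You take an \emph{arbitrary} inductive witness $u \in F'$, identify it with an element of $F(t) \subset P_{9^{k-1}}$, and claim $u a_1 u^{-1} \in F = F(a)$ with $F$-length $\simeq 9^{|u|_{F(t)}}$. But $F(a)$ is not normal in $P_{9^{k-1}}$: the defining relations give only $t_i F(a) t_i^{-1} \subseteq F(a)$, the endomorphisms $a_j \mapsto W_{ij}$ being injective but not surjective, so $t_i^{-1} a_1 t_i \notin F(a)$ in general. If the reduced $t$-word for $u$ ends in a negative letter, Britton's lemma shows $u a_1 u^{-1} \notin F(a)$, and there is no $F$-length to estimate. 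The repair is to strengthen the induction: at every stage the distortion witness can be chosen to be a \emph{positive} word in the ultra-convex generators, because the base witness $t_1^n a_1 t_1^{-n} = \phi_1^n(a_1)$ is a positive word in the $a_j$'s and the edge identification carries positive $a$-words in one factor to positive $t$-words in the next. For positive $u$, each conjugation by a $t_i$ sends a positive $a$-word to a positive $a$-word of exactly $9$ times the length (no cancellation, since the $W_{ij}$ are positive), so $u a_1 u^{-1} \in F$ and $|u a_1 u^{-1}|_F = 9^{|u|_{F(t)}}$ does hold. The normal-form bookkeeping you flag in the upper bound is the other place needing care; it, too, is exactly what the paper's citation to \cite{BBD} is standing in for.
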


\begin{proof}
Using the building blocks of Proposition~\ref{prop:block1}, for any integer $k>0$, we define the group $H_k$ as
$$H_k = P_1 \ast_{F_9} P_9 \ast_{F_{9^2}} P_{9^2} \ast \cdots \cdots \ast_{F_{9^{k-1}}} P_{9^{k-1}}$$\\
where $F_{9^i}$ for $1\leq i \leq k-1$, is a free group of rank $9^i$ which is identified with the exponentially distorted free subgroup of $P_{9^{i-1}}$ and the ultra-convex free subgroup of $P_{9^i}$.

The group homomorphism $\phi : P_n \rightarrow F(t)$, defined by $\phi(t_r)=t_r$ and $\phi(a_j)=1$, is a retraction of $P_n$ onto $F(t)$. Consequently, $F_{9^i}$ is a retract of $P_{9^i}$ for all $1 \leq i \leq k-1$,  implying that $F_{9^i}$ is a quasi-convex subgroup of $P_{9^i}$. Furthermore, since $P_n$ is torsion free group  and hyperbolic  (by Proposition~\ref{prop:block1}), it follows from Lemma~\ref{retract} that $F_{9^i}$ is a malnormal subgroup of $P_{9^i}$.  Repeatedly applying Theorem~\ref{amalhyp}, we conclude by induction that the group $H_k$ is hyperbolic.

Let $Y_k$ denote the presentation $2$-complex of $H_k$. Then $ Y_1 = X_1$ is locally CAT$(0)$  by Proposition \ref{prop:block1}.
From the inclusion $Y_1 \subset Y_2 \subset \cdots \subset Y_k$, the large link condition can be checked by induction on $k$.  The proof is the same as Theorem 1.3 in 
\cite{BBD}.

Additionally, similar to Theorem 1.3 in 
\cite{BBD}, it can be shown that for the exponentially distorted subgroup $F_{9^{k}}$ of $P_{9^{k-1}}$, $\delta^{H_k}_{F_{9^k}} = f^k(x)$, where $f(x)=9^x$.


\end{proof}

For a fixed integer $m > 0$, we define $H_{k,m} = P_{4m} \ast_{F_{9(4m)}} P_{9(4m)} \ast_{F_{9^2(4m)}} \ast \ldots \ldots \ast_{F_{9^{k-1}(4m)}} P_{9^{k-1}(4m)}$. Following the arguments in the proof above, one observes that $H_{k,m}$ is also a $2$-dimensional cubulated hyperbolic group containing a free subgroup with iterated exponential distortion. This leads us to the following corollary.

\begin{cor}\label{hkm}
For any integer $k,m > 0$, $H_{k,m}$ is a $2$-dimensional cubulated hyperbolic group containing a free subgroup $F$ of rank $9^k(4m)$, such that $\delta_F^{H_{k,m}} \simeq f^k(x)$, where $f(x)=9^x$.
\end{cor}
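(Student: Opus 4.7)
The strategy is to run the proof of Proposition~\ref{itrexp} verbatim, with the starting block $P_1$ replaced by $P_{4m}$ throughout, and track the change in ranks. First, each constituent $P_{9^i(4m)}$ is a $2$-dimensional cubulated hyperbolic group by Proposition~\ref{prop:block1}, its ultra-convex free subgroup $F(t)$ has rank $9^i(4m)$, and its exponentially distorted free subgroup $F(a)$ has rank $9 \cdot 9^i(4m) = 9^{i+1}(4m)$. Hence the two subgroups identified in each amalgamation $P_{9^{i-1}(4m)} \ast_{F_{9^i(4m)}} P_{9^i(4m)}$ are both free of rank $9^i(4m)$, as required.

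For hyperbolicity, I would apply Theorem~\ref{amalhyp} iteratively along the chain. At each stage, the amalgamating subgroup is the ultra-convex subgroup $F(t)$ of $P_{9^i(4m)}$, which is a retract via the homomorphism $t_r \mapsto t_r$, $a_j \mapsto 1$; consequently it is quasi-convex, and since $P_{9^i(4m)}$ is torsion-free hyperbolic, Lemma~\ref{retract} gives that $F(t)$ is malnormal there. Thus Theorem~\ref{amalhyp} combines the already-hyperbolic partial amalgam with the next block to yield hyperbolicity of $H_{k,m}$ by induction on $k$.

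For the cubulation, each $2$-cell of the presentation complex of $H_{k,m}$ inherits the piecewise Euclidean square structure from Proposition~\ref{prop:block1}, and the large link condition is checked inductively along the chain $Y_1 \subset Y_2 \subset \cdots \subset Y_k$ of presentation $2$-complexes, exactly as in Theorem 1.3 of \cite{BBD}. Together with hyperbolicity, this gives a CAT($0$) cubical structure on the universal cover. Finally, the free subgroup $F$ of rank $9^k(4m)$ in the statement is the exponentially distorted subgroup $F(a)$ inside the final block $P_{9^{k-1}(4m)}$; each amalgamation composes one further exponential, yielding $\delta_F^{H_{k,m}} \simeq f^k(x)$ with $f(x)=9^x$. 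The only subtle point—showing that distortion composes cleanly rather than degenerating to an upper bound—is handled exactly as in Proposition~\ref{itrexp} via the ultra-convexity of the amalgamating subgroup, which forces geodesic representatives to realize the full distortion at every stage of the chain.
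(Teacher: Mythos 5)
Your proposal is correct and takes the same approach as the paper, which itself simply defers to the argument of Proposition~\ref{itrexp} (swapping $P_1$ for $P_{4m}$ throughout). Your rank bookkeeping — that $F(t) \leq P_{9^i(4m)}$ has rank $9^i(4m)$, $F(a) \leq P_{9^i(4m)}$ has rank $9^{i+1}(4m)$, and the final distorted subgroup in $P_{9^{k-1}(4m)}$ therefore has rank $9^k(4m)$ — is exactly what is needed, and the retract/malnormality/large-link verifications are the ones the paper invokes.
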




\section{Building Block With A 2-Vertex Complex}\label{s6}

\begin{figure}[h]
\centering
\captionsetup[sub]{font=footnotesize}%
  \subcaptionbox{$1$-skeleton of $Z_m.$\label{fig3:Z_m1}}{\includegraphics[width=2in]{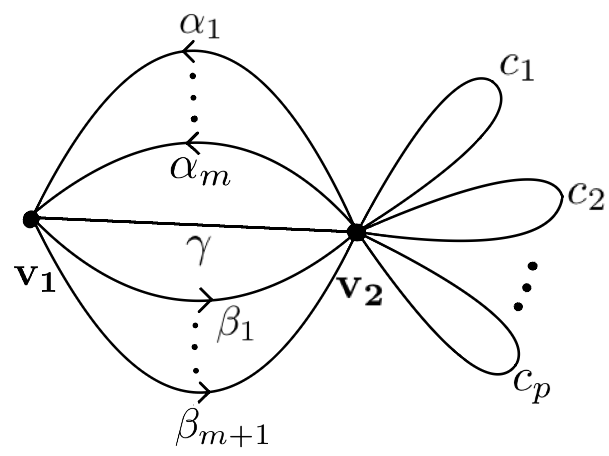}}\hspace{3em}%
  \subcaptionbox{$1$-skeleton of $Z_m'.$\label{fig3:Z_m'}}{\includegraphics[width=2in]{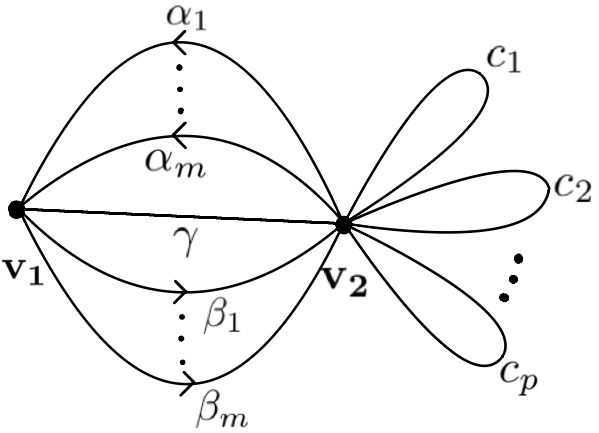}}
  \vspace{.10em}%

\subcaptionbox{The subcomplex $S_m$ of $Z_m.$ \label{fig3:S_m1}}{\includegraphics[width=1.6in]{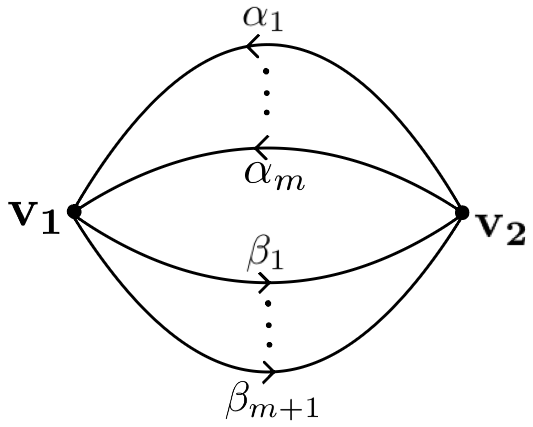}}\hspace{8em}%
\subcaptionbox{The subcomplex $S_m'$ of $Z_m'.$ \label{fig3:S_m'1}}{\includegraphics[width=1.6in]{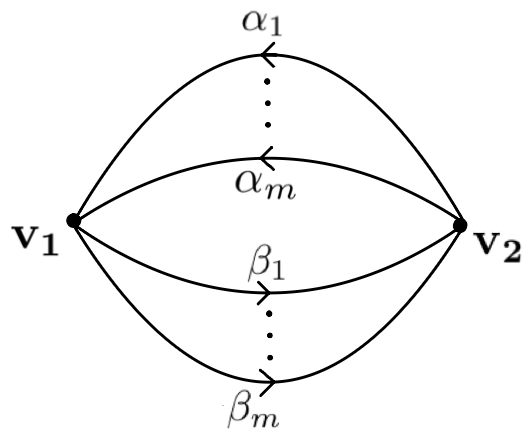}}\hspace{1em}%
   \caption{The $1$-skeletons of $Z_m$ and $Z_m'$ and their respective subcomplexes $S_m$ and $S_m'.$}%
  \label{Zns and Sns}%
\end{figure}

\begin{figure}[h]
    \centering
    \includegraphics[width=0.5\linewidth]{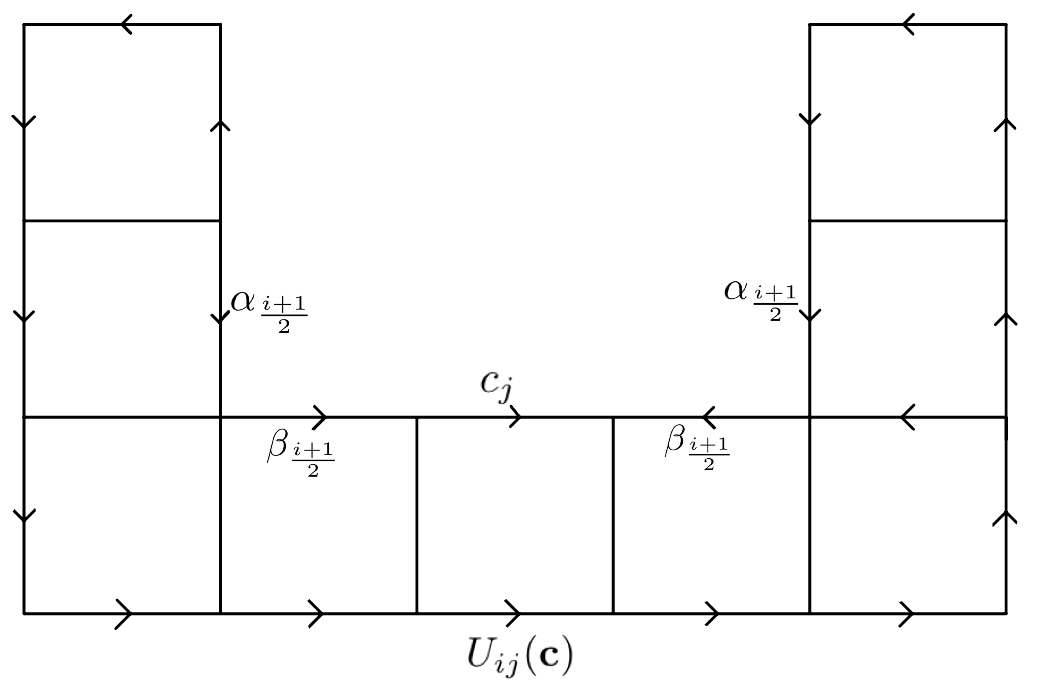}
    \caption{Cell decomposition of $Z_m$ and $Z_m'$ into Euclidean squares. This is for the first relation as in Definition~\ref{block2}. Similar cell decompositions exist for the other three types of relations.}
    \label{cube cx structure on Zns}
\end{figure}

We construct $2$-dimensional cube complexes with $2$-vertices such that their fundamental groups are hyperbolic and CAT($0$). From an algebraic standpoint, we prove that the groups contain exponentially distorted free subgroups and ultra-convex free subgroups. The central idea behind these construction is similar to that of $P_n$ in Section~\ref{s4}.  

\begin{defn}\label{block2} ($2$-dimensional cubulated hyperbolic building block $Q_m$ and $Q'_m$)
For each positive even integer $m$, we define building block groups $Q_m$ and $Q'_m$ to be the fundamental group of the $2$-complexes $Z_m$ and $Z'_m$, respectively, based at the vertex $v_2$. The $1$-skeletons of these complexes are shown in Figure \ref{Zns and Sns}. The $2$-cells are attached along the boundaries as per the following attaching maps 

\begin{align*}
& \alpha_{\frac{i+1}{2}} \beta_{\frac{i+1}{2}} c_j \beta^{-1}_{\frac{i+1}{2}} \alpha^{-1}_{\frac{i+1}{2}}U_{ij}^{-1}, \; i \; \text{odd}, \; 1 \leq j \leq p\\
& \alpha_{\frac{m+k+1}{2}} \beta_{\frac{k+1}{2}} c_j \beta^{-1}_{\frac{k+1}{2}} \alpha^{-1}_{\frac{m+k+1}{2}} U_{kj}^{-1}, \; k \; \text{odd}, \; 1 \leq j \leq p\\
& \beta^{-1}_{\frac{i+2}{2}} \alpha^{-1}_{\frac{i}{2}} c_j \alpha_{\frac{i}{2}} \beta_{\frac{i+2}{2}} U_{ij}^{-1}, \; i \; \text{even}, \; 1 \leq j \leq p\\
& \beta^{-1}_{\frac{m+k+2}{2}} \alpha^{-1}_{\frac{k}{2}} U_{ij}^{-1} c_j \alpha_{\frac{k}{2}} \beta_{\frac{m+k+2}{2}} U_{kj}^{-1}, \; k \; \text{even}, \; 1 \leq j \leq p\\
\end{align*}
to construct $Z_m$ ($1 \leq i, k \leq m$) and $Z'_m$ ($1 \leq i \leq m, 1 \leq k \leq m-1$), where each $\{U_{ij}\}$ and $\{U_{kj}\}$ is a positive words of length $15$ in $c_1, \cdots c_{p}$ satisfying the no two-letter repetition property as defined in ~\ref{wiselong}. The complexes $Z_m$ and $Z_m'$ are given a piecewise Euclidean structure by decomposing each $2$-cell into a concatenation of right-angled Euclidean squares, as illustrated in Figure \ref{cube cx structure on Zns}.  We denote the specific 1-dimensional subcomplex of $Z_m$ (or $Z'_m$) by $S_m$ (or $S'_m$) as shown in Figure \ref{Zns and Sns}.



Following the constraints outlined in the Definition ~\ref{wiselong}, a CAT$(0)$ structure on $Q_m$ requires that $p^2 \geq 15(2m)p$.
Similarly, to endow $Q_m'$ with a CAT$(0)$ structure, we require $p^2 \geq 15(2m-1)p$. For the remainder of the article, we work with $p=36m$.


\end{defn}

\begin{prop}\label{prop:block2}
The $2$-complex $Z_m$ can be given a non-positively curved cube complex structure with the following properties:

(1) The $c_j$'s generate a free subgroup $F(c)$ of rank $36m$ whose distortion in $Q_m$ is exponential.


(2) The universal cover $\widetilde{Z_m}$ is hyperbolic.

\end{prop}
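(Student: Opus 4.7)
The plan is to mirror the proof of Proposition~\ref{prop:block1} step by step, adapting each argument to accommodate the fact that $Z_m$ has two vertices $v_1,v_2$ instead of one. The cube complex structure is already specified by the decomposition of each $2$-cell into five right-angled Euclidean squares shown in Figure~\ref{cube cx structure on Zns}, so what remains is to verify nonpositive curvature, to exhibit exponential distortion of $F(c)$, and to prove hyperbolicity of $\widetilde{Z_m}$. The main obstacle relative to the single-vertex case is that I must verify the large link condition and analyze potential short loops at \emph{both} vertices: the $c_j$-loops sit at $v_2$ while the $\alpha_i,\beta_j$ edges connect $v_1$ and $v_2$ as in the subcomplex $S_m$, so the links at $v_1$ and $v_2$ have different combinatorial shapes and must be treated separately (cf.\ Figure~\ref{links}(a)--(d) versus (a')--(d')).

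For nonpositive curvature I would verify Gromov's large link condition at each of $v_1$ and $v_2$. The essential combinatorial input is the no-two-letter-repetition property of the collection $\{U_{ij},U_{kj}\}$, which is guaranteed by the choice $p=36m$: since $p^2\geq 15(2m)p$, each $U$ can be chosen as a consecutive length-$15$ subword of Wise's word $\Sigma(c_1,\ldots,c_p)$ of Definition~\ref{wiselong}. Combined with the positivity of every $U$, this is exactly the hypothesis Wise uses in \cite{MR1423338}, and the local arguments already invoked in Proposition~\ref{prop:block1} transfer directly to the corners of the Euclidean squares meeting at $v_1$ and $v_2$, ruling out loops of length less than $2\pi$ in either link.

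For part (1), I would establish that $F(c)$ is a free subgroup of rank $36m$ following the template of Proposition 2.2 of \cite{BBD} used already in Proposition~\ref{prop:block1}: the rose $R_c\subset Z_m$ at $v_2$ consisting of the $c_j$-loops embeds as a local isometry, whence its fundamental group $F(c)$ injects into $Q_m$. For the distortion estimate, each relator has the form $\alpha\beta\,c_j\,\beta^{-1}\alpha^{-1}=U_{ij}$ (or one of the three variants listed in Definition~\ref{block2}), so conjugating $c_j$ by a length-$n$ word in the $\alpha\beta$-alphabet produces, after iterated rewriting, a word of length roughly $15^n$ in $F(c)$, while the corresponding word in $Q_m$ has length linear in $n$. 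This gives the exponential lower bound on $\delta_{F(c)}^{Q_m}$, and the matching upper bound is the standard van Kampen area estimate for bounded-length relators.

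For part (2), I would apply the Flat Plane Theorem~\ref{flt}. Assume for contradiction that $\widetilde{Z_m}$ contains an isometric copy of $\mathds{E}^2$. Every $2$-cell in the flat is a lift of one of the five Euclidean squares of some relator, so at some vertex $\tilde v$ lying in the flat the link contains a simple loop of length exactly $2\pi$ passing through a segment as in Figure~\ref{links}(a) or (a'). The completions of such a segment into a simple $2\pi$-loop fall into the configurations (b)--(d) at $v_2$ and (b')--(d') at $v_1$. Configurations (b) and (b') are excluded by the no-two-letter-repetition property of $\{U_{ij},U_{kj}\}$, while (c), (d), (c'), (d') would require a single edge in the link joining two positive (or two negative) endpoints of $c_i$-edges, which is impossible because every $U_{ij},U_{kj}$ is a positive word. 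Hence $\widetilde{Z_m}$ admits no embedded Euclidean plane, and the Flat Plane Theorem concludes that $Q_m$ is hyperbolic.
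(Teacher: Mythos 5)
Your treatment of nonpositive curvature and of part (2) matches the paper: both verify the large-link condition at each vertex using the no-two-letter-repetition and positivity properties of the $U$'s, and both rule out flats via the link-loop analysis in Figure~\ref{links}, so $Q_m$ is hyperbolic by the Flat Plane Theorem. (Minor slip: the $2$-cells of $Z_m$ decompose into nine squares, not five; five is for $X_n$.)

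However, there is a genuine gap in your argument for part (1). You claim the rose $R_c$ of $c_j$-loops at $v_2$ ``embeds as a local isometry,'' and deduce $\pi_1$-injectivity from that. This is false: each $U_{ij}$ is a positive word of length $15$ in the $c_j$'s, so whenever $c_k c_l$ is a subword of some $U_{ij}$, the cube decomposition produces a length-$\tfrac{\pi}{2}$ edge in $\Lk(v_2,Z_m)$ joining $c_k^+$ to $c_l^-$. Thus $\Lk(v_2,R_c)$ is \emph{not} $\pi$-separated in $\Lk(v_2,Z_m)$ --- $R_c$ is not locally convex, let alone ultra-convex. (Indeed, if it were ultra-convex the subgroup $F(c)$ could not be distorted, which would contradict the very statement you are trying to prove.) This is also not the argument in \cite[Proposition~2.2]{BBD}: there, as in the paper's own proof here, injectivity of $F(c)$ is obtained algebraically. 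One views $Q_m$ as the fundamental group of a graph of groups over a bouquet of $2m$ circles with vertex and edge group $F(c)$, where on the $i$-th edge the two edge-to-vertex monomorphisms are $\mathrm{id}_{F(c)}$ and $\phi_i\colon c_j\mapsto U_{ij}$; one then verifies by Stallings foldings that each $\phi_i$ is injective, whereupon Bass--Serre theory gives that $F(c)$ embeds in $Q_m$. Your distortion estimate (exponential lower bound by iterated conjugation, matching upper bound by the standard area argument) is fine once injectivity is secured, but you should replace the local-isometry claim with the graph-of-groups/Stallings argument.
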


\begin{proof}

Each $2$-cell of $Z_m$ has a piecewise Euclidean structure since it is defined by expressing it as a concatenation of nine right-angled Euclidean squares, as shown in Figure \ref{cube cx structure on Zns}. Similar to the proof of Proposition~\ref{prop:block1}, 
one can verify the large link condition on $Z_m$, which implies that $Q_m$ is a CAT($0$) group.

We now show that the $c_j$'s generate a free subgroup $F(c)$ of rank $36m$. 


The group $Q_m$ admits a graph of group structure with underlying graph a bouquet of $2m$ circles. The vertex and edge groups are the free group $F(c)$, generated by $c_1, \ldots, c_{36m}$, with associated monomorphisms defined as follows on the $i$th edge:

$$id_{F(c)} : F(c) \rightarrow F(c), \hspace{10pt} id_{F(c)}(c_j)=c_j,$$
$$\phi_i : F(c) \rightarrow F(c), \hspace{10pt}  \phi_i(c_j)=U_{ij},$$
for $1 \leq j \leq 36m$. Each word $U_{ij}$ is a positive word in the generators $c_1, \ldots, c_{36m}$, so the map $\phi_i$ can be viewed as a homomorphism from $F(c)$ to $F(c)$.



We observe that $F(c)$ is a free group of rank $36m$. Using Stallings' folding algorithm, one can verify that the map $\phi_i$ is an injective (see the details in Proposition 2.2 of \cite{BBD}). This implies that $F(c)$ embeds in $Q_m$ as a subgroup. Applying the same argument as in \cite[Proposition 2.2]{BBD}, we conclude that the distortion of $F(c)$ in $Q_m$ satisfies
$$\delta_{F(c)}^{Q_m} (x) \simeq 14^x.$$ This completes the proof of (1).




Finally, the arguments presented in Proposition~\ref{prop:block1} show that $\widetilde{Z_m}$ contains no subspace isometric to $\mathds{E}^2$. By Flat Plane Theorem ~\ref{flt}, this shows that the group $Q_m$ is hyperbolic.

\end{proof}

The following proposition records analogous properties for $Q'_m$ and $Z'_m$. The proof follows the same outline as that of  Proposition~\ref{prop:block2}.

\begin{prop}\label{prop:block2'}
The $2$-complex $Z'_m$ can be given a non-positively curved cube complex structure with the following properties:

(1) The $c_j$'s generate a free subgroup $F(c)$ of rank $36m$ whose distortion in $Q'_m$ is exponential.


(2) The universal cover $\widetilde{Z'_m}$ is hyperbolic.

\end{prop}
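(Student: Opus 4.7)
The plan is to mirror the proof of Proposition~\ref{prop:block2} step by step, with only bookkeeping changes to accommodate the fact that $Z'_m$ uses index ranges $1 \le i \le m$ and $1 \le k \le m-1$ instead of $1 \le i,k \le m$, so that its underlying graph of groups has one fewer loop. First, I would put a piecewise Euclidean structure on each $2$-cell of $Z'_m$ by decomposing it into a concatenation of right-angled Euclidean squares, exactly as in Figure~\ref{cube cx structure on Zns} (the pictured decomposition is for the first of the four families of relators from Definition~\ref{block2}, and analogous decompositions exist for the other three). Because the words $U_{ij}$ and $U_{kj}$ are positive words in the $c_j$'s satisfying the no-two-letter-repetition property, the same large link verification used in Propositions~\ref{prop:block1} and \ref{prop:block2} applies verbatim to $Z'_m$, showing that $Z'_m$ is nonpositively curved and hence $Q'_m$ is a CAT$(0)$ group.

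Next, for (1), I would produce the graph of groups description of $Q'_m$. The only change is that the underlying graph is now a bouquet of $2m-1$ circles (one for each edge indexed by $\alpha_\ell$ or $\beta_\ell$ giving rise to a stable letter), rather than $2m$. Each vertex and edge group is $F(c)$ of rank $36m$, and the associated monomorphisms are the identity and the maps $\phi_i(c_j)=U_{ij}$, $\phi_k(c_j)=U_{kj}$. These maps are injective by the same Stallings-folding argument as in \cite[Proposition 2.2]{BBD}. So $F(c)$ embeds in $Q'_m$, and the distortion calculation in \cite[Proposition 2.2]{BBD} carries over unchanged to give $\delta_{F(c)}^{Q'_m}(x) \simeq 14^x$, which is exponential.

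For (2), I would argue that $\widetilde{Z'_m}$ contains no isometrically embedded copy of $\EE^2$ and then invoke the Flat Plane Theorem~\ref{flt}. The link analysis is the same as in Proposition~\ref{prop:block1}: any hypothetical flat would force a simple loop of length $2\pi$ in some $\Lk(v_\ell, Z'_m)$ for $\ell \in \{1,2\}$ containing a subpath between vertices of the form $c_i^+$ and $c_j^-$, and the only three possible completions are ruled out respectively by the no-two-letter-repetition property (case (b)) and by the fact that all of the $U_{ij}, U_{kj}$ are positive words (cases (c) and (d)). The presence of two vertices $v_1, v_2$ in $Z'_m$ rather than one just means the check must be carried out at each of them, but the structure of the attaching maps ensures the same conclusion at each vertex.

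The main substantive thing to verify, and the only step that is not strictly a direct copy of the earlier arguments, is that the link check at both vertices $v_1$ and $v_2$ of $Z'_m$ still rules out all non-positive edges of the relevant type; this depends on confirming that every edge of $Z'_m$ incident to the vertex in question appears with a consistent sign in the $U_{ij}, U_{kj}$ words, so that cases (c) and (d) of Figure~\ref{links} really are prohibited. Since the attaching words in Definition~\ref{block2} are built solely out of the $\alpha_\ell, \beta_\ell, c_j$ according to a fixed positive template, this is immediate, and no new obstacle arises.
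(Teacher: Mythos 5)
Your proposal is correct and matches the paper's approach: the paper itself states only that the proof "follows the same outline as that of Proposition~\ref{prop:block2}," and you have correctly filled in the routine bookkeeping (one fewer loop in the graph of groups, index ranges $1\le i\le m$, $1\le k\le m-1$) while reusing the cube-complex decomposition, Stallings-folding injectivity, distortion estimate, and link analysis. One small remark: the two-vertex nature of the complex is not actually a new feature of $Z'_m$ relative to $Z_m$ (both are $2$-vertex complexes), so the check at $v_1$ and $v_2$ in your last paragraph was already implicitly handled in Proposition~\ref{prop:block2} and introduces no additional obstacle here.
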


\section{${\exp}^k(x^m)$ Distortion}\label{s7}

In this section, we prove Theorem~\ref{thm1}. One aspect of Theorem~\ref{thm1} is to get $\exp^k(x^m)$ distortion functions. So far, we have only constructed groups containing exponential or iterated exponential distortion functions. To introduce polynomial distortion, we revisit the construction in \cite{BS} of certain virtually special groups containing polynomially distorted free subgroups. The central idea for proving Theorem~\ref{thm1} is again to amalgamate groups by identifying the distorted subgroup of one with the ultra-convex subgroup of the other. However, the use of $2$-vertex complexes in our construction in Theorem~\ref{thm1} introduces some challenges in showing certain complexes are non-positively curved cube complexes. To overcome these we take help of Theorem~\ref{algtopo} and Lemma~\ref{ultra convexity chaining}.\\

For each $m \in \mathbb{N}$ and $k=0, \ldots, m$, Brady and Soroko, in \cite{BS}, constructed a family of virtually special cubulated groups $G_{m,k}$ with the following presentation: $$G_{m,k} = \langle \; s_1, \ldots, s_{m+k+1} \; | \;[s_i,s_{i+1}]=1 \textit{ for } i = 1, \ldots, m ; \; s^{-1}_{m+j+1}s_js_{m+j+1}=s_{m+j} \textit{ for } j=1, \ldots, k \; \rangle.$$ 

Let $K_{m,k}$ denote the presentation $2$-complex associated with the above presentation of $G_{m,k}$. 

\begin{prop}\cite[Proposition 5.1]{BS} \label{kmk} The presentation complex $K_{m,k}$ defined above is a non-positively curved cube complex.
\end{prop}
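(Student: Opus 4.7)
The plan is to verify Gromov's link condition at the unique 0-cell of $K_{m,k}$. Because every defining relator of $G_{m,k}$ has length four, $K_{m,k}$ is a square complex; give each 2-cell the piecewise Euclidean structure of a unit Euclidean square. It then suffices to show that the link $\Lk(v, K_{m,k})$ of the unique vertex $v$ is a simplicial graph of girth at least four, i.e., has no bigons and no triangles.

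The first step is to enumerate the link edges. For each generator $s_\ell$ place two link vertices $s_\ell^+$ and $s_\ell^-$. Reading corners around the boundary of each 2-cell shows that the commutator relator $[s_i,s_{i+1}]$ contributes the four edges $\{s_i^\alpha, s_{i+1}^\beta\}$ with $\alpha,\beta \in \{+,-\}$ (a 4-cycle), while the conjugation relator $s_{m+j+1}^{-1} s_j s_{m+j+1} s_{m+j}^{-1}$ contributes the four edges $\{s_{m+j+1}^-, s_j^\pm\}$ and $\{s_{m+j+1}^+, s_{m+j}^\pm\}$, which form two disjoint length-two paths meeting the stable letter $s_{m+j+1}$ on opposite sides.

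To rule out bigons, note that the unordered index pairs contributed by corners split into three families: $\{i,i+1\}$ with $i\le m$ (commutators), $\{j,m+j+1\}$ with $j\le k$ (``target'' corners of conjugations), and $\{m+j,m+j+1\}$ with $j\le k$ (``image'' corners of conjugations). Every stable-letter index $m+j+1\ge m+2$ lies outside the commutator range $\{1,\ldots,m+1\}$, and distinct values of $j$ give distinct stable letters, so these three collections of index pairs are pairwise disjoint. The four corners of any single relator also produce four distinct pairs, so no pair of link vertices is joined by more than one edge.

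The heart of the argument is ruling out triangles. The decisive sign asymmetry is that the minus end $s_{m+j+1}^-$ of a stable letter is adjacent only to $s_j^\pm$, whereas its plus end $s_{m+j+1}^+$ is adjacent only to $s_{m+j}^\pm$ (and, when $j+1\le k$, to $s_{m+j+2}^+$). A short case split on how many of the three generator indices of a candidate triangle lie in the base range $\{1,\ldots,m+1\}$ versus the stable-letter range $\{m+2,\ldots,m+k+1\}$ eliminates all configurations except the family $\{a,a+1,a+2\}$ with $m=1$; but in that exceptional case the two edges meeting at the stable-letter vertex $s_{a+2}^?$ would demand both signs $+$ and $-$ simultaneously, which is impossible. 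The main obstacle is precisely this step: the index-only adjacency graph genuinely does contain triangles when $m=1$ and $k\ge 1$, so one cannot avoid working with the $\pm$-sign refinement, and it is the sign dichotomy between the ``target'' and ``image'' corners of each conjugation 2-cell that forces every candidate triangle to fail to close.
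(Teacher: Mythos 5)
Your proof is correct and takes the same approach as Brady--Soroko's original argument: realize $K_{m,k}$ as a square complex (every relator has length four) and verify Gromov's link condition at the unique vertex by showing the link graph has no bigons or triangles, with the essential point being the sign split at stable-letter link vertices ($s_{m+j+1}^-$ sees $s_j^\pm$, $s_{m+j+1}^+$ sees $s_{m+j}^\pm$) that kills the only index-level triangle (which occurs when $m=1$). The only nitpick is that, with $j+1\le k$, \emph{both} $s_{m+j+1}^+$ and $s_{m+j+1}^-$ are adjacent to $s_{m+j+2}^+$ (your parenthetical only records this for the $+$ end), but this does not affect the triangle-free conclusion.
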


The group $G_{m,k}$ also admits free-by-cyclic structure \cite[Proposition 5.4]{BS}; that is:

$$G_{m,k} \cong F_{m+k} \rtimes_{\phi_{m,k}} \mathbb{Z},$$
where $F_{m+k} = \langle A_1, \ldots, A_m,B_1, \ldots, B_k \rangle$, $\mathbb{Z}= \langle t \rangle$ with $A_i = s^{-1}_{i+1}s_i$ for $1 \leq i \leq m$, and $B_j = s^{-1}_{m+j+1}s_j$ for $ 1 \leq j \leq k$. The automorphism $\phi_{m,k}$ is defined as follows (bar denotes the inverse of an element in the group):

\begin{align*}
\phi_{m,k} : \; & A_1 \mapsto A_1\\
& A_2 \mapsto A_1 (A_2) {\overline{A}_1}\\
& A_3 \mapsto A_1 A_2 (A_3) {\overline{A}_2} {\overline{A}_1}\\
& \vdots\\
& A_m \mapsto A_1 A_2 \ldots A_{m-1} (A_m) \overline{A}_{m-1} \ldots \overline{A}_1\\
& B_1 \mapsto A_1 A_2 \ldots A_m (B_1)\\
& B_2 \mapsto A_1 A_2 \ldots A_m (B_1 B_2) \overline{A}_1\\
& B_3 \mapsto A_1 A_2 \ldots A_m (B_1 B_2 B_3) \overline{A}_2 \overline{A}_1\\
& \vdots\\
& B_k \mapsto A_1 A_2 \ldots A_m (B_1 B_2 \ldots B_k) \overline{A}_{k-1} \overline{A}_{k-2} \ldots \overline{A}_2 \overline{A}_1 \\
\end{align*}


\begin{defn} 
    The \textit{growth} of an automorphism $\phi: F \rightarrow F$, where $F$ is a finitely generated free group with basis $\mathcal{A}$, is defined by 
    $$gr_{\phi, \mathcal{A}}(n)=  \max_{a \in \mathcal{A}}  \norm{\phi^n(a)}_\mathcal{A} .$$
\end{defn}

\begin{thm} \cite[Theorem A]{BS} \label{BSvspcl}  For each positive even integer $m$, the groups $G_{m,m} \cong F_{2m} \rtimes_{\phi_{m,m}} \mathbb{Z}$ and  $G_{m,m-1} \cong F_{2m-1} \rtimes_{\phi_{m,m-1}} \mathbb{Z}$ are virtually special, with growth function satisfying $gr_{\phi_{m,m}} \sim n^m$ and $gr_{\phi_{m,m-1}} \sim n^{m-1}$, respectively.
\end{thm}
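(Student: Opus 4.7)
The assertion combines two distinct claims: (a) the groups $G_{m,m}$ and $G_{m,m-1}$ are virtually special; and (b) the polynomial growth rates $gr_{\phi_{m,m}}(n) \sim n^m$ and $gr_{\phi_{m,m-1}}(n) \sim n^{m-1}$ of the defining automorphisms. I would handle these separately.

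For (a), my starting point is Proposition~\ref{kmk}, which already equips the presentation $2$-complex $K_{m,k}$ with a non-positively curved cube complex structure. Because the commutator relations $[s_i,s_{i+1}]=1$ embed several copies of $\Z^2$ into $G_{m,k}$, the group is not word-hyperbolic, and hence Agol's Theorem~\ref{Agol's theorem} does not apply. I would therefore verify the Haglund--Wise specialness conditions directly on the hyperplanes of $K_{m,k}$, noting that each hyperplane is dual to the orbit of a single generator $s_i$, so two-sidedness and the combinatorial adjacencies between hyperplanes can be read off the presentation. To remove self-osculations and inter-osculations, I would pass to a finite-index cover using residual finiteness of these groups (they are linear, being free-by-cyclic with a polynomially growing monodromy). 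A more structural alternative is to exhibit an explicit local isometry from a finite cover of $K_{m,k}$ into the Salvetti complex of a suitable right-angled Artin group, with vertices of the defining graph corresponding to generators and edges corresponding to the commutation relations encoded in $K_{m,k}$.

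For (b), the growth rates can be extracted from the explicit formulas for $\phi_{m,k}$ by a careful inductive word-length computation that exploits a substantial amount of cancellation. A convenient first step is to show by induction on $n$ that
\[
\phi_{m,k}^n(A_i) \;=\; A_1^n A_2^n \cdots A_{i-1}^n \, A_i \, \overline{A}_{i-1}^{\,n} \cdots \overline{A}_1^{\,n},
\]
so that $|\phi_{m,k}^n(A_i)| = 2(i-1)n + 1$ is only linear in $n$; the inductive step uses the fact that $\phi(A_j^n)$ collapses to $A_1 A_2 \cdots A_{j-1} A_j^n \overline{A}_{j-1}\cdots\overline{A}_1$, and the adjacent blocks in the product telescope via repeated $\overline{A}_k^{\,n} A_k^{n}=e$ cancellations. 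Next, I would prove by induction on $j$ that $|\phi_{m,k}^n(B_j)| \sim n^j$. Expanding $\phi_{m,k}^{n+1}(B_j)$ using the formula $\phi(B_j) = A_1\cdots A_m B_1\cdots B_j \overline{A}_{j-1}\cdots\overline{A}_1$ and collapsing adjacent $A$-tails yields a recursion of the form
\[
|\phi_{m,k}^{n+1}(B_j)| \;=\; |\phi_{m,k}^n(B_j)| + c\,|\phi_{m,k}^n(B_{j-1})| + O(n),
\]
which telescopes to $|\phi_{m,k}^n(B_j)| \sim n^j$. Taking the maximum over the basis of $F_{m+k}$ then gives $gr_{\phi_{m,k}}(n) \sim n^k$, which specializes to the two claimed rates at $k = m$ and $k = m-1$.

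The main obstacle in this plan is part (a): verifying virtual specialness for a non-hyperbolic cubulated group requires either hands-on hyperplane analysis in a finite cover or construction of a local isometry into a Salvetti complex, and no shortcut through Theorem~\ref{Agol's theorem} is available. The growth analysis in (b) is conceptually routine once one notices the dramatic cancellations in the iterates of $\phi_{m,k}$, but tracking the $A$-tails across the $B_j$ recursion still demands careful bookkeeping.
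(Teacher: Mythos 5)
This statement is quoted verbatim as \cite[Theorem~A]{BS}; the paper cites it without giving a proof (the growth rate is also imported from \cite[Section~8]{BS} and used as a black box in Lemma~\ref{BSdist}). There is therefore no ``paper's own proof'' to compare against, so I will simply assess your sketch on its merits.

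Your treatment of part~(b) is essentially sound. The closed form $\phi_{m,k}^n(A_i)=A_1^n\cdots A_{i-1}^n A_i \overline{A}_{i-1}^n\cdots\overline{A}_1^n$ is correct (it follows by induction from the conjugation identity $\phi(A_j^n)=A_1\cdots A_{j-1}A_j^n\overline{A}_{j-1}\cdots\overline{A}_1$, with adjacent blocks telescoping), and it gives linear growth of the $A_i$. The recursion you write down for $|\phi^n(B_j)|$ is in the same spirit as \cite[Claim~8.8]{BS}, which the present paper uses in Lemma~\ref{BSdist}, and either form telescopes to $|\phi^n(B_j)|\sim n^j$, giving $gr_{\phi_{m,k}}(n)\sim n^k$ and hence the two claimed rates at $k=m$ and $k=m-1$. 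You correctly identify that the maximum over the basis is achieved at $B_k$ rather than at an $A_i$.

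Part~(a) is where the real gap is. You correctly observe that $G_{m,k}$ contains $\Z^2$ (from $[s_i,s_{i+1}]=1$) and hence is not hyperbolic, so Theorem~\ref{Agol's theorem} is unavailable. But the route you then propose --- pass to a finite cover using residual finiteness --- does not by itself deliver virtual specialness. Removing one-sidedness, self-osculation, and inter-osculation in a finite cover requires separability of the hyperplane subgroups (and, for inter-osculations, separability of the corresponding double cosets), which is precisely the substantive content of the Haglund--Wise criterion; residual finiteness of the ambient group is far weaker. Moreover, your assertion that $G_{m,k}$ is linear ``being free-by-cyclic with a polynomially growing monodromy'' is not a known fact and cannot be cited; free-by-cyclic groups are residually finite (Borisov--Sapir), but linearity of this class is not established in general. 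Your second alternative, constructing a local isometry from (a finite cover of) $K_{m,k}$ into a Salvetti complex, is much closer to what actually works for these groups, but as stated it is only a plan, not an argument: you would need to exhibit the RAAG, the cover, and the combinatorial map, and verify the local isometry condition on links. Since the present paper does not reprove \cite[Theorem~A]{BS}, the honest course is simply to cite it; if you wish to reconstruct the proof of virtual specialness, the hyperplane-separability route (or the explicit Salvetti local isometry) is where the work lies, and neither residual finiteness nor an unsupported linearity claim will substitute for it.
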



In the following lemma we compute the distortion functions of $F_{2m}$ and $F_{2m-1}$ in $G_{m,m}$ and $G_{m,m-1}$, respectively.

\begin{lem}\label{BSdist} 
The distortion functions of $F_{2m}$ in $G_{m,m}$ satisfies $\delta_{F_{2m}}^{G_{m,m}} (n) \simeq n^{m+1}$  and the distortion of  ${F_{2m-1}}$ in $G_{m,m-1}$ satisfies $\delta_{F_{2m-1}}^{G_{m,m-1}} (n) \simeq n^{m}$.
\end{lem}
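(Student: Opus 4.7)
The plan is to use the free-by-cyclic decomposition $G_{m,k} \cong F_{m+k}\rtimes_{\phi_{m,k}}\mathbb{Z}$ together with the growth estimates of Theorem~\ref{BSvspcl} (polynomial of degree $d=m$ for $G_{m,m}$ and $d=m-1$ for $G_{m,m-1}$). The underlying principle is the classical fact that when $\phi$ is polynomially growing of degree $d$, the fiber $F$ has distortion $\simeq n^{d+1}$ in $F\rtimes_\phi\mathbb{Z}$; I will prove matching upper and lower bounds directly in this setting.

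For the upper bound, I would take $w\in F$ with $|w|_G \leq N$ and express $w$ as a product of $N$ generators of $G$. Using the defining relation $ta = \phi(a)t$, I would push every occurrence of $t^{\pm 1}$ to one end of the word; since $w\in F$ has zero total $t$-exponent, the $t$'s cancel completely, and each of the at most $N$ original $F$-letters $a_j$ emerges as $\phi^{\pm e_j}(a_j)$ for some $|e_j|\leq N$. Theorem~\ref{BSvspcl} bounds $|\phi^{\pm e_j}(a_j)|_F \leq C N^d$, and summing yields $|w|_F \leq C N^{d+1}$.

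For the lower bound, I would exhibit the witness $w_n := t^n x^n t^{-n}$, where $x$ is a generator of $F$ realizing the maximum growth (take $x=B_m$ for $G_{m,m}$ and $x=B_{m-1}$ for $G_{m,m-1}$). Then $|w_n|_G \leq 3n$, and as an element of $F$ one has $w_n = \phi^n(x)^n$. Writing the reduced form $\phi^n(x) = u v u^{-1}$ with $v$ cyclically reduced gives $\phi^n(x)^n = u v^n u^{-1}$ of $F$-length $2|u| + n|v| \geq n|v|$. Consequently the lower bound reduces to the cyclic length estimate $|v|\sim n^d$.

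The hard part will be verifying this cyclic length estimate, since $\phi$ acts on each $A_j$ by conjugation and therefore introduces a conjugating ``skin'' in $\phi^n(x)$. Iterating the explicit formulas $\phi(A_j) = A_1\cdots A_{j-1}A_j A_{j-1}^{-1}\cdots A_1^{-1}$ and $\phi(B_j) = A_1\cdots A_m B_1\cdots B_j A_{j-1}^{-1}\cdots A_1^{-1}$, I would show by induction on $n$ that $\phi^n(x) = u_n v_n u_n^{-1}$ with $|u_n| = O(n)$ (the skin, typically of the form $A_1^n A_2^n\cdots A_{m-1}^n$) and $|v_n|\sim n^d$ (the cyclic core). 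The key invariant is that $\phi(A_1)=A_1$ stabilizes the outermost layer while the interior accumulates the polynomial growth; combined with the upper bound, this yields $\delta_F^G(N)\simeq N^{d+1}$ as stated.
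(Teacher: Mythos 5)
Your proposal is correct and takes essentially the same route as the paper: same witness word $t^n B_m^n t^{-n}$ for the lower bound, same push-the-$t$'s-out argument for the upper bound, and the same realization that the crux is controlling cancellation between adjacent copies of $\phi^n(B_m)$. The one organizational difference is your cyclic-reduction framing $\phi^n(x) = u v u^{-1}$ with $v$ cyclically reduced, so that $\phi^n(x)^n = u v^n u^{-1}$ and the lower bound reduces to $|v|\sim n^m$. This is a cleaner abstraction than the paper's direct computation of $\norm{\phi^n(B_m)\cdot\phi^n(B_m)}$ from the formulas in \cite[Lemma~8.4, Lemma~8.6, Claim~8.8]{BS}, but it cashes out to the same underlying fact: the conjugating ``skin'' of $\phi^n(B_m)$ has length $O(n)$. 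Indeed, the paper proves exactly this (in the form $\phi^n(B_k) = A_1^n\cdots A_m^n u_{k,n}B_k \overline{A}_{k-1}^n\cdots\overline{A}_1^n$ with $u_{k,n}$ positive) as Lemma~\ref{linear} a few paragraphs later, for a different purpose; you could simply invoke that statement to finish your skin estimate rather than redoing the double induction. One small point both you and the paper leave implicit in the upper bound: pushing $t$'s through a word of zero $t$-exponent produces $\phi^{\pm e_j}(a_j)$ with both signs, so one needs that $\phi^{-1}$ also grows polynomially of the same degree; this is standard for polynomially-growing free group automorphisms (and can be read off from the triangular form of $\phi_{m,k}$), but it deserves a line.
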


\begin{proof}

We prove that $\delta_{F_{2m}}^{G_{m,m}} (n) \simeq n^{m+1}$ and a similar argument can be used to prove $\delta_{F_{2m-1}}^{G_{m,m-1}} (n) \simeq n^{m}$. From hereon we will denote $\phi_{m,m}$ by $\phi$. For a fixed $n \in \mathbb{N}$, one gets $gr_{\phi} (n) = \norm{\phi^n(B_m)} \simeq n^m$, from the results in \cite[Section 8]{BS}. Now consider the word $w=t^n{B_m}^nt^{-n}$ in $G$.  Clearly, $|w|_G \leq 3n$. Now,

\begin{align*}
w &=t^n B_m \ldots B_m t^{-n}\\
&=t^{n-1}tB_mt^{-1} \ldots tB_mt^{-1}t^{-(n-1)}\\
&=t^{n-1}\phi(B_m) \ldots \phi(B_m)t^{-(n-1)}\\
&=t^{n-2}t\phi(B_m)t^{-1} \ldots t\phi(B_m)t^{-1} t^{-(n-2)}\\
&=t^{n-2}\phi^2(B_m) \ldots \phi^2(B_m)t^{-(n-2)}\\
& \vdots\\
&=\underbrace{
    \phi^n(B_m)  \ldots \phi^n(B_m)
    }_{\text{$n$~elements}}
\end{align*}
We now compute the length of $w$ as a word in $F_{2m}$. To accomplish this, we will be using certain results concerning $\phi$ from \cite{BS}. 

Firstly, by \cite[Lemma 8.4]{BS}, we have $$\phi^n(B_1)=A_1^nA_2^n \ldots A_m^nB_1.$$ 
\\
By the definition of $\phi$, we also have 

$$\phi(B_2)=\phi(B_1)B_2 \overline{A}_1.$$\\
For all $k \geq 2$, from \cite[Claim 8.8]{BS}, one gets $$\phi^n(B_{k+1})=\phi^n(B_k)\phi^{n-1}(A_1 \ldots A_{k-1})\phi^{n-1}(B_{k+1}){\phi^{n-1}(A_1 \ldots A_k)}^{-1}.$$
\\
Taking $k+1=m$ in the above equality, we get $$\phi^n(B_{m})=\phi^n(B_{m-1})\phi^{n-1}(A_1 \ldots A_{m-2})\phi^{n-1}(B_{m}){\phi^{n-1}(A_1 \ldots A_{m-1})}^{-1}.$$
\\
Using the above equality, one observes that when written in terms of the letters $A_i$ and $B_j$, $\phi^n(B_m)$ will begin with $\phi^n(B_1)=A_1^nA_2^n \ldots A_m^nB_1$.

Finally, from [\cite{BS}, Lemma 8.6], we get that $${\phi^{n-1}(A_1 \ldots A_{m-1})} = A_1^{n}A_2^n \ldots A_{m-2}^nA_{m-1} \overline{A}^n_{m-2} \ldots \overline{A}^n_2 \overline{A}^n_1.$$
Thus, $\norm{\phi^n(B_m).\phi^n(B_m)} = 2n^m - 2n(m-2)-1$ (since, the $A_i$ and $B_i$ s generate a free group, no further cancellation is possible). Hence, $$\norm{\phi^n(B_m) \ldots \phi^n(B_m)} = n.n^{m} - (2n(m-2)-1)(n-1) \simeq n^{m+1},$$ since $m \geq 2$.

So, $|w|_{F_{m,m}} = \norm{\phi^n(B_m) \ldots \phi^n(B_m)} \simeq n^{m+1}$. This establishes the lower bound.

To prove the upper bound, take a word $w$ in $F_{m,m}$ such that $|w|_G \leq n$. Cancelling the innermost $t \ldots t^{-1}$ pairs, $w$ can be expressed as a word in the $a_i$'s of length at most $(\frac{n}{2})^m.n \simeq n^{m+1}$. Hence, $\delta_{F_{m,m}}^G (n) \simeq n^{m+1}$.

\end{proof}
The following lemma will be useful for proving Lemma~\ref{linear} and follows from a similar proof of \cite[Lemma 8.6]{BS}.
\begin{lem}\label{productA} For all $n , k > 0$,
${\phi^{n}(A_1 \ldots A_{k})} = A_1^{n}A_2^n \ldots A_{k-1}^nA_{k} \overline{A}^n_{k-1} \ldots \overline{A}^n_2 \overline{A}^n_1.$
\end{lem}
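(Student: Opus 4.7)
The plan is to prove the identity by induction on $n$, guided by the observation that the claimed right-hand side has a conjugation shape. Setting $L_n := A_1^{n} A_2^{n} \cdots A_{k-1}^{n}$, the assertion can be rewritten as
$$\phi^{n}(A_1 A_2 \cdots A_k) \;=\; L_n \cdot A_k \cdot L_n^{-1},$$
which is convenient because $\phi$ is a homomorphism and the conjugation form is preserved under application of $\phi$.

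First I would handle the base case $n=1$ by a secondary induction on $k$. Using the defining formula $\phi(A_i) = (A_1\cdots A_{i-1}) A_i (A_{i-1}^{-1}\cdots A_1^{-1})$, one expands
$$\phi(A_1\cdots A_{k+1}) \;=\; \phi(A_1\cdots A_k)\cdot \phi(A_{k+1}).$$
The trailing tail $A_{k-1}^{-1}\cdots A_1^{-1}$ of the induction hypothesis cancels telescopically against the leading head $A_1\cdots A_{k-1}$ of $\phi(A_{k+1})$, leaving only an extra $A_k$ that merges into the central block of the product. This yields the $n=1$ version of the identity cleanly.

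For the inductive step from $n$ to $n+1$, I would apply $\phi$ to the conjugation form from the induction hypothesis, getting $\phi^{n+1}(A_1\cdots A_k) = \phi(L_n)\,\phi(A_k)\,\phi(L_n)^{-1}$. An auxiliary lemma --- namely, the formula for $\phi(A_1^p A_2^p \cdots A_{k-1}^p)$ for arbitrary $p$, which one proves first by a short telescoping argument entirely analogous to the base case --- lets one rewrite $\phi(L_n)$ in the shape $L_{n+1}\cdot (\text{correction})$. Inserting this into the expression for $\phi^{n+1}(A_1\cdots A_k)$, the correction factors cancel against $\phi(A_k) = L_1 A_k L_1^{-1}$ by the same telescoping mechanism used at the base, leaving the desired $L_{n+1}\cdot A_k \cdot L_{n+1}^{-1}$.

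The main obstacle is purely bookkeeping: the expressions involve long alternating runs of $A_i$ and $A_i^{-1}$, and one must track the cancellations carefully to avoid dropping or inserting letters. The paper's hint that the argument follows \cite[Lemma 8.6]{BS} suggests handling this by isolating the telescoping step into a single auxiliary identity (the formula for $\phi$ applied to $A_1^p\cdots A_{k-1}^p$), after which both the base case and the inductive step reduce to concatenations of already-established formulas rather than ground-up manipulation of letters. With that auxiliary identity in hand, the induction on $n$ becomes routine.
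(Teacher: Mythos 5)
The central structural premise of your plan---that the right-hand side is a genuine conjugate $L_n A_k L_n^{-1}$ and so transforms as a conjugate under $\phi$---does not hold, because the identity as printed in Lemma~\ref{productA} is off by one and the true formula is not symmetric. A one-line check exposes this: $\phi(A_1A_2) = A_1\cdot(A_1A_2\overline{A}_1) = A_1^2 A_2 \overline{A}_1$, whereas the stated formula with $n=1$, $k=2$ would give $A_1 A_2 \overline{A}_1$. More generally the correct statement is
$\phi^{n}(A_1\cdots A_k) = A_1^{n+1}\cdots A_{k-1}^{n+1}\,A_k\,\overline{A}_{k-1}^{\,n}\cdots\overline{A}_1^{\,n}$,
i.e.\ exponent $n+1$ on the positive letters and $n$ on the negative ones. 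This is a product $L_{n+1}A_k M_n^{-1}$ with $L_{n+1}=A_1^{n+1}\cdots A_{k-1}^{n+1}$ and $M_n = A_1^n\cdots A_{k-1}^n$, which is not a conjugate since $L_{n+1}\neq M_n$. Consequently the ``conjugation form is preserved'' heuristic has nothing to act on, and your inner induction on $k$ for the base case also breaks: with the symmetric IH, $\phi(A_1\cdots A_k)\cdot\phi(A_{k+1})$ telescopes to $A_1\cdots A_{k-1}A_k^{\,2}A_{k+1}\overline{A}_k\cdots\overline{A}_1$, not to $A_1\cdots A_kA_{k+1}\overline{A}_k\cdots\overline{A}_1$. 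Had you actually carried out the cancellation you sketch, you would have seen the exponent creeping up.

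That said, your overall strategy (induction on $n$, mediated by an auxiliary telescoping identity for $\phi$ applied to $A_1^{p}\cdots A_{k-1}^{p}$) is essentially sound once retargeted at the corrected, asymmetric formula. One verifies $\phi(A_1^p\cdots A_{k-1}^p) = A_1^{p+1}\cdots A_{k-2}^{p+1}A_{k-1}^{p}\,\overline{A}_{k-2}\cdots\overline{A}_1 = (A_1^{p+1}\cdots A_{k-1}^{p+1})(A_1\cdots A_{k-1})^{-1}$; then applying $\phi$ to $L_{n+1}A_kM_n^{-1}$, the interior factors $(A_1\cdots A_{k-1})^{\mp 1}$ coming from $\phi(L_{n+1})$ and $\phi(M_n)^{-1}$ exactly absorb the conjugating pieces in $\phi(A_k)=(A_1\cdots A_{k-1})A_k(A_1\cdots A_{k-1})^{-1}$, giving $L_{n+2}A_kM_{n+1}^{-1}$. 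So the gap is not in the inductive machinery you propose but in the target identity and the conjugation-shape premise built on it; fix the exponents and the induction closes. (The downstream use in Lemma~\ref{linear} only needs the positive prefix and the tail $\overline{A}_{k-1}^{\,n}\cdots\overline{A}_1^{\,n}$, so the off-by-one does not propagate, but the lemma should still be stated correctly.)
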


\begin{defn}
A word in $G_{m,k}$ with the free-by-cylic presentation is called a \emph{positive} word if it does not contain any inverse letters ($\overline{A_1}, \ldots, \overline{A}_m,  \overline{B_1}, \ldots, \overline{B_k}, \overline{t}$).
\end{defn}


\begin{lem}\label{linear}
For all $n , k > 0$, $\phi^n(B_k) = A_1^n \ldots A_m^n u_{k,n}B_k\overline{A}^{n}_{k-1} \ldots \overline{A}^{n}_1$, where $u_{k,n}$ is a positive word.
\end{lem}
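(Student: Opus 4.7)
The proof will proceed by induction on $n$, with the strengthened inductive hypothesis that $u_{k,n}$ is a positive word beginning with $B_1$ (taken to be empty when $k=1$).

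For the base case $n=1$, the definition of $\phi$ immediately gives $\phi(B_k) = A_1 A_2 \ldots A_m \cdot (B_1 B_2 \ldots B_{k-1}) \cdot B_k \cdot \overline{A}_{k-1} \ldots \overline{A}_1$, so one sets $u_{k,1} = B_1 B_2 \ldots B_{k-1}$, a positive word beginning with $B_1$.

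For the inductive step, apply $\phi$ to the expression provided by the inductive hypothesis. Using Lemma~\ref{productA} and an analogous telescoping computation for the inverse side, one computes
\begin{align*}
\phi(A_1^n A_2^n \ldots A_m^n) &= A_1^{n+1} A_2^{n+1} \ldots A_{m-1}^{n+1} A_m^n \cdot \overline{A}_{m-1} \ldots \overline{A}_1, \\
\phi(\overline{A}_{k-1}^n \overline{A}_{k-2}^n \ldots \overline{A}_1^n) &= A_1 A_2 \ldots A_{k-2} \cdot \overline{A}_{k-1}^n \overline{A}_{k-2}^{n+1} \ldots \overline{A}_1^{n+1},
\end{align*}
together with $\phi(B_k) = A_1 \ldots A_m \cdot B_1 \ldots B_k \cdot \overline{A}_{k-1} \ldots \overline{A}_1$ from the definition. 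Concatenating these factors with $\phi(u_{k,n})$ in the obvious positions and using the cancellations $\overline{A}_{m-1} \ldots \overline{A}_1 \cdot A_1 \ldots A_m = A_m$ (valid because $u_{k,n}$ begins with $B_1$, so $\phi(u_{k,n})$ begins with $A_1 \ldots A_m$) and $\overline{A}_{k-1} \ldots \overline{A}_1 \cdot A_1 \ldots A_{k-2} = \overline{A}_{k-1}$, one isolates the prefix $A_1^{n+1} \ldots A_m^{n+1}$ and the suffix $B_k \cdot \overline{A}_{k-1}^{n+1} \ldots \overline{A}_1^{n+1}$. Writing $u_{k,n} = B_1 w$, the middle factor becomes
\[
u_{k,n+1} = B_1 \cdot \phi(w) \cdot A_1 A_2 \ldots A_m \cdot B_1 B_2 \ldots B_{k-1},
\]
which begins with $B_1$, so the structural part of the hypothesis propagates automatically.

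The main obstacle is to verify positivity of $u_{k,n+1}$: since $\phi(A_i)$ contains the negative block $\overline{A}_{i-1} \ldots \overline{A}_1$, the factor $\phi(w)$ generally contains negative letters even after free reduction. To handle this systematically, the plan is to introduce the auxiliary conjugation
\[
\psi(x) := \overline{A}_m \overline{A}_{m-1} \ldots \overline{A}_1 \cdot \phi(x) \cdot A_1 A_2 \ldots A_m,
\]
which is multiplicative on concatenations because intermediate factors of $A_1 \ldots A_m \cdot \overline{A}_m \ldots \overline{A}_1$ cancel. On generators, $\psi(B_j) = B_1 B_2 \ldots B_j A_j A_{j+1} \ldots A_m$ is a positive word, while $\psi(A_i) = \overline{A}_m \ldots \overline{A}_{i+1} A_i A_{i+1} \ldots A_m$ is not. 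Rewriting the recursion as $u_{k,n+1} = \psi(u_{k,n}) \cdot B_1 \ldots B_{k-1}$, a short case analysis of consecutive pairs $\psi(B_j) \psi(A_i)$ shows that whenever an $A_i$-letter in $u_{k,n}$ is immediately preceded by a $B_j$-letter with $j \leq i+1$, the negative prefix of $\psi(A_i)$ is absorbed by the trailing $A_j \ldots A_m$ of $\psi(B_j)$; tracking this through $u_{k,n}$ using the explicit combinatorial shape inherited from the recursion (in particular, that every block of $A$-letters is flanked by appropriate $B$-letters) then yields the positivity of $\psi(u_{k,n})$, and hence of $u_{k,n+1}$, completing the induction.
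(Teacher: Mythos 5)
Your proof proposal takes a different route from the paper and, as you yourself flag, runs into a genuine obstacle that it does not overcome.

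You apply $\phi$ to the expression for $\phi^n(B_k)$, i.e.\ you use the decomposition $\phi^{n+1}=\phi\circ\phi^n$. This forces you to compute $\phi(u_{k,n})$, which a priori contains negative letters (since $\phi(A_i)=A_1\cdots A_{i-1}A_i\overline{A}_{i-1}\cdots\overline{A}_1$), and your strengthened hypothesis ``$u_{k,n}$ is positive and begins with $B_1$'' records nowhere near enough structure on $u_{k,n}$ to guarantee that the negative blocks are absorbed. You acknowledge this and sketch an auxiliary conjugation $\psi$ together with a ``short case analysis'' and a ``tracking through $u_{k,n}$ using the explicit combinatorial shape inherited from the recursion,'' but this is exactly the content that would have to be proved, and it would require a substantially stronger inductive hypothesis than the one you set up (something spelling out, block by block, which $B_j$'s flank which $A_i$-powers). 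As written, the positivity of $u_{k,n+1}$ is asserted rather than established, and the induction does not close.

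The paper avoids this entirely by using the opposite decomposition: since $\phi(B_k)=A_1\cdots A_m\,B_1\cdots B_k\,\overline{A}_{k-1}\cdots\overline{A}_1$, one writes
\[
\phi^{j+1}(B_k)=\phi^{j}\bigl(A_1\cdots A_m\bigr)\,\phi^{j}\bigl(B_1\cdots B_k\bigr)\,\phi^{j}\bigl(\overline{A}_{k-1}\cdots\overline{A}_1\bigr),
\]
and then only needs the forms of $\phi^{j}(A_i)$ (from Lemma~\ref{productA}) and $\phi^{j}(B_i)$ for single letters (from the inductive hypothesis, which is why the paper runs a double induction on both $n$ and $k$, so that the form is available for all $i\le k$ at step $j$). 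Concatenating those known normal forms, the inner cancellations are telescoping products of the shape $\overline{A}_{i-1}^{j}\cdots\overline{A}_1^{j}\cdot A_1^{j}\cdots A_m^{j}=A_i^{j}\cdots A_m^{j}$, which are manifestly positive, so the new $u_{k,j+1}$ is a concatenation of positive blocks and positivity is automatic. No auxiliary map or combinatorial tracking of the internal structure of $u_{k,n}$ is needed. If you want to salvage your single-induction approach, you would need to strengthen the inductive invariant to record explicitly the block structure of $u_{k,n}$ (essentially reproving what the paper's decomposition gives you for free); otherwise, switch to the $\phi^{j+1}=\phi^{j}\circ\phi$ decomposition.

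One minor note: the formula you display for $\phi(A_1^n\cdots A_m^n)$ is correct, and in fact it shows that Lemma~\ref{productA} as printed has an off-by-one in the exponents on the left-hand block (it should read $A_1^{n+1}\cdots A_{k-1}^{n+1}A_k\overline{A}_{k-1}^n\cdots\overline{A}_1^n$); you implicitly corrected this, whereas the paper's proof would need the corrected version to produce the prefix $A_1^{j+1}\cdots A_m^{j+1}$.
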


\begin{proof}
The proof is by double induction on $n$ and $k$. Since $\phi (B_1) = A_1 \ldots A_m B_1$, the base case of the induction holds true. By induction hypothesis, $$\phi(B_k) = A_1 \ldots A_m u_{k,1}B_k \overline{A}_{k-1} \ldots \overline{A}_1.$$  
From the definition of $\phi$,  we have $$\phi(B_{k+1}) = \phi (B_k) . (A_1 \ldots A_{k-1}). B_{k+1} . (\overline{A}_k \ldots \overline{A}_1).$$ 
Using the form of $\phi(B_k)$ in the inductive hypothesis, it clearly follows that $\phi(B_{k+1})$ has the desired form.

Assume for a fixed $k$,
$$\phi^j(B_k) = A_1^j \ldots A_m^j u_{k,j}B_k\overline{A}^{j}_{k-1} \ldots \overline{A}^{j}_1, \;  \forall \; k \geq 1.$$
To end the double induction argument, we need to show $\phi^{j+1} (B_k)$ has the desired form. One observes from the definition of $\phi$ that $$\phi^{j+1} (B_k) = \phi^j (A_1 \ldots A_m) \phi^j (B_1 \ldots B_k) \phi^j (\overline{A}_{k-1} \ldots \overline{A}_1).$$
The form of $\phi^j(B_k)$ in the inductive hypothesis and Lemma~\ref{productA} gives us the desired form for $\phi^{j+1} (B_{k}).$






\end{proof}

We state a gluing theorem for adjunction spaces, a version of which appeared as Theorem 7.5.7 in \cite{brown}.

\begin{thm}\label{algtopo} (Gluing theorem for adjunction spaces)
If $X$, $Y$, and $Z$ are topological spaces, with $A \subset X$, a map $f : A \rightarrow Y$, and a homotopy equivalence $\phi : Y \rightarrow Z$, such that $(X,A)$ has the homotopy extension property, then the space $Y \bigcup_f X$ is homotopy equivalent to $Z \bigcup_{\phi \circ f} X$.
\end{thm}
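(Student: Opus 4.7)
The plan is to produce the homotopy equivalence explicitly, using $\phi$ together with the homotopy extension property (HEP) of the pair $(X,A)$. First, I would define $\Phi : Y \cup_f X \to Z \cup_{\phi \circ f} X$ to be $\phi$ on the $Y$-component and the identity on the $X$-component. This map descends to the quotients because, for $a \in A$, the identifications agree: the class of $f(a)$ in $Y \cup_f X$ equals the class of $a$, and its image $\phi(f(a))$ in $Z \cup_{\phi \circ f} X$ also equals the class of $a$.

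To construct a homotopy inverse $\Psi$, let $\psi : Z \to Y$ be a homotopy inverse of $\phi$, with a homotopy $h_t : Y \to Y$ from $\mathrm{id}_Y$ to $\psi \circ \phi$ and a homotopy $k_t : Z \to Z$ from $\mathrm{id}_Z$ to $\phi \circ \psi$. The naive formula $[z] \mapsto [\psi(z)]$, $[x] \mapsto [x]$ fails because $\psi \circ \phi \circ f$ need not equal $f$. To fix this, I would invoke HEP: starting from the canonical map $j_X : X \to Y \cup_f X$, the homotopy $a \mapsto j_Y \circ h_t \circ f(a)$ on $A$ extends to a homotopy $\widetilde{H}_t : X \to Y \cup_f X$ with $\widetilde{H}_0 = j_X$. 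Setting $g := \widetilde{H}_1$ yields a map with $g|_A = j_Y \circ \psi \circ \phi \circ f$. Now $\Psi$ can be defined by $\psi$ on the $Z$-component and $g$ on the $X$-component; the agreement on $A$ is precisely the boundary condition just imposed.

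To finish, I would verify $\Phi \Psi \simeq \mathrm{id}$ and $\Psi \Phi \simeq \mathrm{id}$. For the composition $\Psi \Phi$, on the $Y$-component it restricts to $\psi \circ \phi \simeq \mathrm{id}_Y$ via $h_t$, while on the $X$-component it equals $g$, and $\widetilde{H}_t$ connects $g$ back to $j_X$. These two homotopies glue on $A$ by construction of $\widetilde{H}_t$, producing a homotopy on the pushout. The reverse composition $\Phi \Psi$ is handled symmetrically: the homotopy $k_t$ on $Z$ together with a second application of HEP (to extend the corresponding homotopy on $A$ to a homotopy of $X$ into $Z \cup_{\phi \circ f} X$) gives the matching pieces.

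The main obstacle is the careful bookkeeping of homotopies over $A$, $X$, $Y$, and $Z$: every homotopy assembled on an adjunction space must be glued from pieces that agree on $A$ throughout the parameter interval, not merely at the endpoints. The HEP is exactly the tool that transports a prescribed homotopy on $A$ to a homotopy on all of $X$ with the correct initial data, making the gluing possible. Apart from HEP and the universal property of the pushout, no further ingredients enter the argument.
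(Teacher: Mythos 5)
The paper does not actually prove this statement; it cites it directly from Brown's \emph{Topology and Groupoids} (Theorem 7.5.7), so there is no in-house argument to compare with. Your construction of $\Phi$, your construction of $\Psi$ via the HEP extension $\widetilde H_t$ of $a\mapsto j_Y\circ h_t\circ f(a)$, and your verification that $\Psi\Phi\simeq\mathrm{id}$ are all correct: $j_Y\circ h_t$ on $Y$ and $\widetilde H_t$ on $X$ agree on $A$ by construction, and hence glue to a homotopy from the identity to $\Psi\Phi$.

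The claim that $\Phi\Psi\simeq\mathrm{id}$ ``is handled symmetrically'' hides a genuine gap. Extending $a\mapsto j_Z\circ k_t\circ(\phi\circ f)(a)$ over $X$ by HEP yields a homotopy $\widetilde K_t:X\to Z\cup_{\phi\circ f}X$ with $\widetilde K_0=j'_X$, but HEP prescribes only the initial time: $\widetilde K_1$ need not equal $\Phi\circ g=\Phi\circ\widetilde H_1$. The two maps agree on $A$ but come from independent extensions and will generally differ off $A$, so the glued homotopy you obtain terminates at a map agreeing with $\Phi\Psi$ on $Z$ and on $A$ but not on $X$. Equivalently, one cannot glue $j_Z\circ k_t$ with $\Phi\circ\widetilde H_t$ directly, because $\phi\circ h_t\circ f$ and $k_t\circ\phi\circ f$ are different homotopies $A\to Z$ unless $h_t$ and $k_t$ have been chosen coherently, which a generic homotopy inverse does not provide. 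The standard way to finish is not symmetric: your argument shows $\Phi$ has a left homotopy inverse $\Psi$. Factor $\Psi=\theta\circ(\psi\cup\mathrm{id})$, where $\psi\cup\mathrm{id}:Z\cup_{\phi\circ f}X\to Y\cup_{\psi\phi f}X$ is the canonical induced map and $\theta:Y\cup_{\psi\phi f}X\to Y\cup_f X$ (identity on $Y$, $g$ on $X$) is a homotopy equivalence by homotopy invariance of adjunction spaces under homotopic attaching maps (another HEP application, to $h_t\circ f$). Applying your left-inverse argument with $\psi$ in place of $\phi$ shows $\psi\cup\mathrm{id}$, and hence $\Psi$, also has a left homotopy inverse. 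Since $\Psi$ additionally has the right homotopy inverse $\Phi$, it is a homotopy equivalence, and therefore so is $\Phi$. Alternatively one can factor through the mapping cylinder $M_\phi$ and reduce to the case where $\phi$ is the inclusion of a strong deformation retract, which avoids the asymmetry entirely.
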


The following lemma shows that when $2$-dimensional non-positively curved cube complexes are glued along ultra-convex subcomplexes, the resulting complex remains non-positively curved.

\begin{lem}\label{ultra convexity chaining} (Ultra convex gluing)
    Let $X$ and $Y$ be $2$-dimensional non-positively curved complexes, and let $Z \subset X, Y$ be a $1$-dimensional subcomplex. If $Z$ is ultra-convex in $X$ (or in $Y$), then the space $ X\sqcup_ZY$ is also a $2$-dimensional non-positively curved complex.
\end{lem}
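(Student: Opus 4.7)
The plan is to verify Gromov's large link condition for $W := X \sqcup_Z Y$, which inherits a natural $2$-dimensional piecewise Euclidean cell structure from $X$ and $Y$. For any vertex $v \notin Z$, the link $\Lk(v, W)$ coincides with either $\Lk(v, X)$ or $\Lk(v, Y)$, and the assumed non-positive curvature supplies the desired bound at once. I therefore focus on vertices $v \in Z$.

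Fix such a $v$ and set $L_X := \Lk(v, X)$, $L_Y := \Lk(v, Y)$, and $L_Z := \Lk(v, Z)$. Because $Z$ is $1$-dimensional, $L_Z$ is a finite set of vertices, and $\Lk(v, W)$ is the pushout $L_X \cup_{L_Z} L_Y$; a path in this pushout can switch between $L_X$ and $L_Y$ only at points of $L_Z$. Assume without loss of generality that $Z$ is ultra-convex in $X$, and suppose for contradiction that $\gamma \subset \Lk(v, W)$ is a simple loop of length less than $2\pi$. If $\gamma$ lies entirely in $L_X$ or in $L_Y$, the large link condition on $X$ or $Y$ is immediately violated. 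Otherwise $\gamma$ uses edges from both sides, and so decomposes into alternating arcs in $L_X$ and $L_Y$ whose endpoints are the transition points of $\gamma$ in $L_Z$. Since $\gamma$ is a simple loop, these transition points are pairwise distinct; selecting any arc $\alpha$ of $\gamma$ lying in $L_X$ with endpoints $p \neq q$ in $L_Z$, ultra-convexity of $Z$ in $X$ gives $d_{L_X}(p, q) \geq 2\pi$, whence $\mathrm{length}(\gamma) \geq \mathrm{length}(\alpha) \geq 2\pi$, contradicting the assumption on $\gamma$.

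The main technical point is really the observation that simplicity of $\gamma$ forces consecutive transition points in $L_Z$ to be distinct, which rules out the otherwise awkward possibility that an $L_X$-arc collapses to a based loop in $L_X$, where ultra-convexity would give no direct bound. Once this is in hand, the argument reduces to a single clean application of ultra-convexity; the remaining bookkeeping about how paths in the pushout decompose is routine, given the $0$-dimensionality of $L_Z$ (which is a consequence of $Z$ being $1$-dimensional).
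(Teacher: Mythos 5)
Your proof is correct and takes essentially the same approach as the paper's: verify the Gromov large link condition, identify the link of a vertex $v \in Z$ with the pushout of $\Lk(v,X)$ and $\Lk(v,Y)$ over the $0$-dimensional $\Lk(v,Z)$, and apply ultra-convexity to any arc of a short loop lying in $\Lk(v,X)$. You are somewhat more careful than the paper's exposition in observing explicitly that simplicity of the loop forces the transition points in $\Lk(v,Z)$ to be distinct, which is exactly what makes the ultra-convexity bound applicable.
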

\begin{proof}
The space $X\sqcup_ZY$ is a $2$-dimensional complex; so by the large link condition, it is enough to show that the link of every vertex in this space contains no loop of length less than $2\pi$. 
Let $v$ be a vertex in $X\sqcup_ZY$. There are two possibilities: 
\begin{enumerate}

    \item  If $v$ is not in $Z$: then it is a vertex from either from $X$ or $Y$ alone. Therefore,  it's link in $X\sqcup_ZY$ is the same as its link in $X$ or $Y$  and since both $X$ and $Y$ are non-positively curved, this link contains no loop shorter than $2\pi$. 
    \item If $v$ is image of a vertex $v_0 \in Z$: suppose $v_0$ maps to a vertex $v_1$ in $X$ and $v_2$ in $Y$. Then the link of $v$ in $X\sqcup_ZY$ is obtained by gluing the link of $v_1$ in $X$ with the link of $v_2$ in $Y$, along the link of $v_0$ in $Z$.
$$\Lk(v, X\sqcup_ZY) = \Lk(v_1, X) \sqcup_{\Lk(v, Z)} \Lk(v_2, Y)$$ 
As a result, any loop in $\Lk(v, X\sqcup_ZY)$ must either be entirely within $\Lk(v, X)$ or $\Lk(v, Y)$, in which case it is at least $2\pi$ in length, or it must partially be contained in both. But since $Z$ in ultra-convex in $X$, any two points in $\Lk(v_0, Z)$ are at least $2\pi$ separated in $\Lk(v_1, X)$, so any such loop must still have length at least $2\pi$. 
   \end{enumerate}
    
    \end{proof}

We now prove our first main theorem.

\begin{thm}\label{mainkm}
For all integers $m > 0$ and $k > 0$, there exists a virtually special group $G$ containing a finite rank free group $F$ such that the distortion function of $F$ in $G$ grows like $\exp^k(x^m)$.
\end{thm}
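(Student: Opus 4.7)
The plan is to combine three ingredients developed earlier: the iterated exponential distortion group $H_{k,m}$ (Corollary~\ref{hkm}), the Brady--Soroko group $G_{m,m-1}$ which contains a free subgroup with polynomial distortion $x^m$ (Theorem~\ref{BSvspcl} and Lemma~\ref{BSdist}), and the $2$-vertex adapter $Q'_m$ (Proposition~\ref{prop:block2'}). The guiding principle is that amalgamating groups by identifying the ultra-convex subgroup of one with the distorted subgroup of the other composes the two distortion functions, as in the proof of Proposition~\ref{itrexp}. Here I want to compose a polynomial $x^m$ distortion at the bottom with the iterated exponential $\exp^k$ chain on top, producing $\exp^k(x^m)$ after applying Lemma~\ref{disteq}.

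Concretely, I would replace the bottom block $P_{4m}$ in the chain defining $H_{k,m}$ by $Q'_m$, identifying the exponentially distorted free subgroup $F(c)$ of rank $36m$ of $Q'_m$ with the ultra-convex free subgroup $F(t)$ of rank $36m$ of $P_{9(4m)}$; call this modified chain $H'_{k,m}$. I would then further amalgamate with $G_{m,m-1}$ by identifying its polynomially distorted subgroup $F_{2m-1}$ with a suitable subgroup of the ultra-convex free group $\pi_1(S'_m)$ of $Q'_m$, noting that the cube complex $Z'_m$ is designed in Definition~\ref{block2} so that $S'_m$ embeds compatibly into the Brady--Soroko complex $K_{m,m-1}$ at the seam. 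Set $G := G_{m,m-1} \ast_{F_{2m-1}} H'_{k,m}$ and take $F$ to be the exponentially distorted free subgroup $F(a)$ of rank $9^k(4m)$ inside the top block $P_{9^{k-1}(4m)}$.

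The verification then has three parts. First, show the presentation complex of $G$ is a nonpositively curved cube complex by iteratively applying Lemma~\ref{ultra convexity chaining}: at each seam one of the identified subcomplexes is ultra-convex in its ambient piece, so the large link condition persists. Second, argue virtual specialness of $G$ by combining Theorem~\ref{Agol's theorem} applied to each hyperbolic piece $P_{9^i(4m)}$ and $Q'_m$ with Theorem~\ref{BSvspcl} for $G_{m,m-1}$; the identified subgroups are malnormal (via Lemma~\ref{retract} applied to the natural retractions onto ultra-convex subgroups), which allows the virtual specialness of each piece to be lifted through the amalgamation. Third, compute the distortion: for the lower bound, start with a word $u \in F_{2m-1}$ of $F_{2m-1}$-length $n^m$ but $G_{m,m-1}$-length $\sim n$ (available by Lemma~\ref{BSdist}), then iteratively conjugate $u$ up the chain using the $\alpha\beta$-type conjugators of $Q'_m$ and the $t$-conjugators of each $P_n$, producing a word $w \in F$ of $F$-length $\sim \exp^k(n^m)$ but $G$-length $\sim n$; for the upper bound, undo the conjugations and compose the individual distortion bounds as in Proposition~\ref{itrexp}.

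The main obstacle is lifting virtual specialness to the full group $G$, since $G$ is not hyperbolic ($G_{m,m-1}$ contains $\mathbb{Z}^2$) and so Theorem~\ref{Agol's theorem} is not directly applicable to $G$ itself. The resolution lies in the careful design of $Q'_m$ in Section~\ref{s6}: its cube complex structure is engineered precisely so that $S'_m$ embeds as a locally convex special subcomplex of both $H'_{k,m}$ and $K_{m,m-1}$, allowing finite special covers on both sides to be matched along a common finite cover of the seam, thereby lifting specialness to a finite-index subgroup of $G$.
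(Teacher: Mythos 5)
Your overall strategy mirrors the paper's: amalgamate a Brady--Soroko group, the $2$-vertex adapter $Q'_m$, and an iterated exponential chain along matched distorted/ultra-convex free subgroups, verify nonpositive curvature seam by seam, and compose distortion functions. However, the final paragraph, which is where you yourself locate the main obstacle, contains a genuine gap. You correctly observe that $G$ is not hyperbolic (since $G_{m,m-1}$ contains $\mathbb{Z}^2$), so Theorem~\ref{Agol's theorem} does not apply to $G$. But the proposed fix --- that $S'_m$ embeds as a special subcomplex on both sides, so finite special covers can be ``matched along a common finite cover of the seam'' --- is not a theorem, and the design of $Q'_m$ does not supply one. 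The known combination theorems for virtual specialness along malnormal quasi-convex subgroups require either hyperbolicity of the ambient group or a precise relatively hyperbolic set-up; there is no general statement that an amalgam of two (compact) virtually special groups over a common malnormal quasi-convex free subgroup is again virtually special when the amalgam fails to be hyperbolic.

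The route that actually closes this gap, and the one the paper uses, is via relative hyperbolicity. One first proves that the purely hyperbolic part of the amalgam, $Q'_m \ast_{F_{36m}} H_{k-1,m}$ (your $H'_{k,m}$), is hyperbolic using Theorem~\ref{amalhyp} together with Lemma~\ref{retract} to get malnormality, and that the edge group identified with $F_{2m-1}$ is quasi-convex and malnormal there. Dahmani's combination theorem then shows $G$ is relatively hyperbolic with respect to the conjugates of $G_{m,m-1}$. Since $G$ is cubulated (this part you do argue) and the peripherals $G_{m,m-1}$ are virtually special by Theorem~\ref{BSvspcl}, the Groves--Manning theorem [Theorem A, MR4413505] on specializing cubulated relatively hyperbolic groups gives virtual specialness of $G$. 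Without citing some result of this type, the proof is incomplete. Two smaller omissions: the case $m=1$ needs to be handled separately (the paper simply takes $G=H_k$, $F=F_{9^k}$), and $G_{m,m-1}$ with $Q'_m$ only covers even exponents $m$, with $G_{m,m}$ and $Q_m$ needed for the odd ones; also, the lower bound of the distortion computation requires checking that the word pushed up the chain does not cancel, which the paper secures by the normal-form analysis in Lemma~\ref{linear}.
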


\begin{proof}

We first construct $G$ and $F$ when $m=1$. In Proposition~\ref{itrexp}, we have shown that $H_k$ is a cubulated hyperbolic group containing the free group $F_{9^k}$ of rank $9^k$ such that $\delta_{F_{9^k}}^{H_k} \simeq f^k(x)$, where $f(x)=9^x$. By Theorem~\ref{Agol's theorem}, $H_k$ is virtually special. Taking $G = H_k$ and $F = F_{9^k}$ proves this case.

For the remainder of the proof let $m \geq 2$.
For all integers $k > 0$ and each positive even integer $m$, we first construct a virtually special group $G$ containing a free group $F$ of rank $9^{k-1}(4m)$ such that $\delta_H^G \simeq \exp^k(x^{m+1})$.

\begin{figure}[h]
    \centering
    \includegraphics[width=0.4\linewidth]{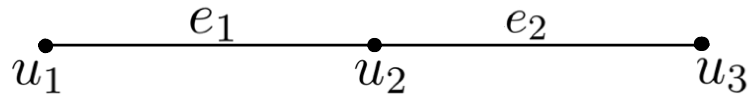}
    \caption{Underlying graph for the graph of groups structure on $G.$}
    \label{graph with 2 vertices}
\end{figure}

Let $G$ be the fundamental group of the graph of groups whose underlying graph is given in Figure \ref{graph with 2 vertices}.
The vertex group at $u_1$,  $u_2$, and  $u_3$ are $G_{m,m}$, $Q_m$, and $H_{k-1,m}$, respectively. The edge group at $e_1$ and $e_2$ are $F_{2m}$ and $F_{36m}$, respectively. Let $Y_{k,m}$, and $K_{m,m}$ be presentation complex of $H_{k,m}$, and $G_{m,m}$, respectively. Also, recall $R_{36m}$, $Z_m$, and $S_m$ defined in Proposition~\ref{prop:block1}, Definition~\ref{block2} respectively. Then, the injective homomorphisms from edges groups to vertex groups are as follows;

\begin{itemize}
    \item The injective homomorphism $\phi_\ast:F_{2m} \rightarrow G_{m,m}$ is induced from the map $\phi : S_m \rightarrow K_{m,m}$ defined as $\phi (\alpha_i)=s^{-1}_{2i}$ for $ 1 \leq i \leq m$, and $\phi (\beta_i)=s_{2i-1},$ for $1 \leq i \leq m+1$,
    \item The injective homomorphism $\iota_\ast:F_{2m} \rightarrow Q_m$ is induced from the natural inclusion map $\iota_\ast: S_m \rightarrow Z_m$,

    \item The injective homomorphism $\psi_\ast:F_{36m} \rightarrow Q_m$ is induced from the map $\psi : R_{36m} \rightarrow Z_{m}$ defined as $\psi (c_i)=c_i, \; 1 \leq i \leq 36m$,
    \item Lastly, the injective homomorphism $\rho_\ast:F_{36m} \rightarrow H_{k-1,m}$ is induced from the map $\rho : R_{36m} \rightarrow Y_{k-1,m}$ defined as $\rho (c_i)=a_i, \; 1 \leq i \leq 36m$.
\end{itemize}

Note that $G=G_{m,m} \ast_{F_{2m}} Q_m \ast_{F_{36m}} H_{k-1,m}$ such that $F_{2m}$ is identified with the polynomially distorted subgroup in $G_{m,m}$ and with the ultra-convex subgroup in $Q_m$, while $F_{36m}$ is identified with the exponentially distorted subgroup in $Q_m$ and with the ultra-convex subgroup $H_{k-1,m}$.

Consider the total space $C_G$ of graph of space for $G$ with vertex space at $u_1$, $u_2$, and $u_3$, to be $K_{m,m}$, $Z_m$ and $Y_{k-1, m}$, and the edge space at $e_1$ and $e_2$ to be $S_m$
and $R_{36m}$, respectively. The maps from edge space to vertex space are $\phi$, $\iota$, $\psi$, and $\rho$. Then by Lemma \ref{algtopo}, $C_G$ is homotopy equivalent to the quotient space
$$C=K_{m,m} \bigsqcup_{S_m} Z_{m} \bigsqcup_{R_{36m}} Y_{k-1,m}.$$



We now prove that $C$ is a non-positively curved cube complex, which will imply that $G$ is a cubulated group. Since the cube complexes $K_{m,m}$, $Z_m$, and $Y_{k-1,m}$ are glued along edges, $C$ also has a cube complex structure.  By Proposition \ref{kmk} and Proposition \ref{prop:block2}, both $K_{m,m}$ and $Z_m$ are $2$-dimensional non-positively curved cube complexes. Furthermore, any two vertices $\alpha^{\pm}$ and $\beta^{\pm}$ in the Lk($v_1, Z_m$) is exactly $\frac{3\pi}{2}$ separated. Hence, any loop in Lk($v, K_{m,m} \bigsqcup_{S_m} Z_{m}$) is atleast of length $2\pi$, where $v$ denotes the unique vertex in $K_{m,m} \bigsqcup_{S_m} Z_{m}$. Thus, by large link condition,  $K_{m,m} \bigsqcup_{S_m} Z_{m}$ is also a non-positively curved cube complex. Now, since $R_{36m}$ is ultraconvex in a $2$-dimensional non-positively curved cube complex $Y_{k-1,m}$, Lemma~\ref{ultra convexity chaining} ensures that $C$ is a non-positively curved cube complex.

We now prove the group $G$ contains a free subgroup $F$ of rank $9^{k-1}(4m)$, whose distortion function in $G$ grows like $\exp^k(x^{m+1})$. By corollary~\ref{hkm}, $H_{k-1,m}$ contains a free subgroup of rank $9^{k-1}(4m)$, say $F$, whose distortion function grows like $\exp^{k-1}(x)$. In $H_{k-1,m}$, let $a^{(i)}_j$, $1 \leq j \leq 9^{i}(4m)$ denote the generators of $F_{9^i(4m)}$, $1 \leq i \leq k-2$. From the results in \cite[Section 8]{BS}, Lemma~\ref{BSdist} and Lemma~\ref{linear}, we get  $t^n \phi^n(B_m) t^{-n} = w_{m,2n}.l_{m,2n}$ where $w_{m,2n}$ is a positive word in the letters $A_i$, $B_j$ and $l_{m,2n}$ is a word in $A^{-1}_i$ s only (i.e does not contain any $A_i$, $B_j$ or $B^{-1}_j$), such that $\norm{l_{m,2n}}_{F_{2m}} \simeq n$. Hence, $\norm{w_{m,2n}}_{F_{2m}} \simeq n^{m+1}$. We define the following sequence of words: 

\begin{align*}
& w_1 = w_{m,2n}\\
& w_2 = w_1 c_1 w^{-1}_1\\
& w_3 = w_2 a^1_1 w^{-1}_2\\
& \vdots\\
& w_{k+1} = w_{k} a^{k-1}_1 w^{-1}_{k}.\\
\end{align*}

One now observes that $w_{k+1} \in F$ with $\norm{w_{k+1}}_G \simeq n$. By Lemma~\ref{disteq}, $\norm{w_{k+1}}_F \simeq \exp^{k}(n^{m+1})$. This establishes the lower bound for the distortion function.

\begin{figure}[h]
    \centering
    \includegraphics[width=0.9\linewidth]{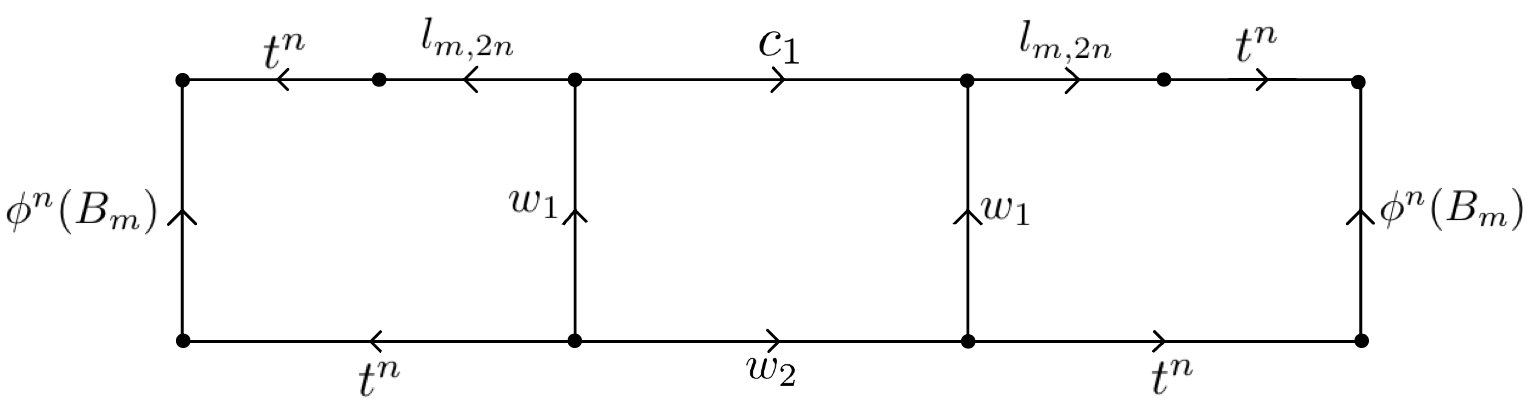}
    \caption{Distortion diagram exhibiting $\exp(x^{m+1})$ distortion for $w_2.$}
\end{figure}

We now prove the upper bound for the distortion. Similar to Proposition~\ref{itrexp}, the distortion of $F$ in $Q_m \ast_{F_{36m}} H_{k-1,m}$ is $\exp^k(x)$. Let $w$ be a word in $F$ such $|w|_G = n$. Cancelling at most $\frac{n}{2}$ innermost $t \ldots t^{-1}$ pairs (here $t$ denotes the stable letter in the free-by-cyclic presentation of $G_{m,k}$), $w$ can be represented by a word in $Q_m \ast_{F_{36m}} H_{k-1,m}$ of length at most $(\frac{n}{2})^{m+1} \simeq n^{m+1}$. Since $\delta_F^{Q_m \ast_{F_{36m}} H_{k-1,m}} \simeq \exp^k(x)$, we conclude that $\delta_F^G$ is bounded above by $\exp^k(x^{m+1})$.

Finally, we show that $G$ is virtually special. $F_{36m}$ is a retract of $H_{k-1,m}$ and hence quasi-convex in $H_{k-1,m}$. By Lemma~\ref{retract}, $F_{36m}$ is malnormal is $H_{k-1,m}$. By Theorem~\ref{amalhyp}, $Q_m \ast_{F_{36m}} H_{k-1,m}$ is hyperbolic. Again, $F_{2m}$ is a retract of $Q_m \ast_{F_{36m}} H_{k-1,m}$ and hence quasi-convex. By Lemma~\ref{retract}, it is also malnormal. So, by \cite[Theorem 2.6]{MR3456181} $Q_m \ast_{F_{36m}} H_{k-1,m}$ is relatively hyperbolic with respect to $F_{2m}$ Hence, by \cite[Theorem 0.1(2)]{MR2026551}, the group $G$ is relatively hyperbolic with respect to the set of conjugates of $G_{m,m}$ in $G$, say $\tilde{C}$. Recall that by Theorem~\ref{BSvspcl}, the groups $G_{m,m}$ are virtually special. Thus $(G, \tilde{C})$ satisfies the conditions of \cite[Hypothesis 1.6]{MR4413505}. Hence, by \cite[Theorem A]{MR4413505}, the group $G$ is virtually special.

To accommodate the remaining cases i.e. $\exp^k(x^{m})$ with $m$ even, we start by defining a graph of spaces for the group $G'=G_{m,m-1} \ast_{F_{2m-1}} Q'_m \ast_{F_{36m}} H_{k-1,m}$. The rest of the proof is identical to the first case. Thus, $G'$ is a virtually special group containing a free group of rank $9^{k-1}(4m)$ with distortion growing like $\exp^k(x^{m})$.

\end{proof}

\begin{rem}\label{why} (Why a $2$-vertex complex?) 
The groups $G_{m,k}$ constructed in \cite{BS} are cubulated and contain a polynomially distorted free subgroup. In contrast, our groups $H_{k,m}$ from Corollary~\ref{hkm} are cubulated groups containing super-exponentially distorted free subgroups. The key distinction between these constructions lies in how the distorted subgroups are embedded into the cubical structure. In $G_{m,m}$, the generators of the distorted free subgroup correspond to diagonals of the squares, rather than to edges of the cubical complex. In contrast, in $H_{k,m}$, the generators of both the distorted and ultra-convex subgroups are actual edges of the $2$-cubes. As a result, amalgamating these two groups along the polynomially distorted subgroup of $G_{m,k}$ and the ultra-convex subgroup of $H_{k,m}$ does not yield a natural cubulation on the resulting group. To address this, we introduce a $2$-vertex complex that serves as an intermediate space in the amalgamation. In this complex, each generator of the ultra-convex subgroup of its fundamental group corresponds to a path consisting of two edges joined at a common vertex. This setup allows us to interpret the diagonals in $G_{m,k}$  as $2$ consecutive edges within $2$-cubes. We then amalgamate this $2$ vertex comoplex along the polynomially distorted subgroup of $G_{m,k}$ and the ultra-convex subgroup of $H_{k,m}$, thereby preserving a consistent cubical structure throughout.
\end{rem}

\section{Higher than any iterated exponential}\label{s8}

We are now ready to prove Theorem~\ref{thm2}. To do so, we perform an HNN extension on $P_n$  (groups defined in Section~\ref{s4}), in which the stable letter maps the exponentially distorted subgroup to the ultra-convex subgroup. The argument follows the same outline as the proof of  \cite[Theorem 4.1]{BBD}.

\begin{thm}\label{bigitr}
There exists a virtually special group $H$ containing a finite rank free subgroup $F$ such that $\delta_F^H$ is bigger than any iterated exponential.
\end{thm}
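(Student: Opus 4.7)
My plan is to realize $H$ as a single HNN extension of one of the building block groups $P_n$ from Section~\ref{s4}. Fix $n \ge 2$ so that the ultra-convex free subgroup $F(t) = \langle t_1, \ldots, t_n \rangle$ has rank at least two, and pick an injective homomorphism $\phi : F(a) \hookrightarrow F(t)$ whose image is quasi-convex and malnormal inside $F(t)$; such a $\phi$ exists because a free group of rank $\ge 2$ contains free subgroups of every finite rank with these properties. Then set
\[
H \;=\; \bigpresentation{P_n,\; s}{\,s^{-1} a_j s \,=\, \phi(a_j),\ 1 \le j \le 9n\,},
\]
and take $F$ to be $F(a)$ (or, if convenient, its enlargement by a single $s$-conjugate of some $a_j$), which is a finitely generated free group.

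To show that $H$ is virtually special, I would extend the cubulation of $X_n$ from Proposition~\ref{prop:block1} to a non-positively curved cube complex for $H$ by attaching the mapping cylinder of the map $R_{9n} \to R_n \subset X_n$ that realizes $\phi$. The large link condition at the new vertices reduces to the ultra-convexity of $R_n$ inside $X_n$, in the same spirit as Lemma~\ref{ultra convexity chaining}. Hyperbolicity of $H$ follows from an HNN analogue of Theorem~\ref{amalhyp}: $F(a)$ is a retract of $P_n$, hence quasi-convex and, by Lemma~\ref{retract}, malnormal in $P_n$, while $\phi(F(a))$ is quasi-convex and malnormal in $F(t)$, which in turn is ultra-convex in $P_n$. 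A combination theorem for HNN extensions (Bestvina--Feighn, or the relatively hyperbolic machinery already invoked in the proof of Theorem~\ref{mainkm}) then gives that $H$ is hyperbolic, and Theorem~\ref{Agol's theorem} yields virtual specialness.

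The heart of the proof is the distortion bound, which I would carry out by mimicking \cite[Theorem 4.1]{BBD}. The construction alternates the two sources of growth available in $H$: the $t_i$-conjugations of $P_n$ that expand $F(a)$-words by a factor of $9$, and the HNN relation $s^{-1} a_j s = \phi(a_j)$ that pushes expanded $F(a)$-words into $F(t)$ where further $t_i$-conjugations in $P_n$ can act expansively on their $\phi$-preimages through the Bass--Serre structure. Iterating this ping-pong $k$ times produces, for every $k \ge 1$, an element $w_k \in F$ with $|w_k|_H = O(k)$ but $|w_k|_F \ge \exp^k(1)$. Since $k$ may be chosen arbitrarily large within the single group $H$, the distortion $\delta_F^H$ dominates every fixed iterated exponential. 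The main obstacle is verifying this recursion rigorously: the lower bound requires careful book-keeping of the nested conjugations along the Bass--Serre tree, and the matching assertion that no shorter $H$-word represents $w_k$ uses Britton's lemma in combination with the hyperbolicity of $H$ established above. Both ingredients follow the template of \cite[Theorem 4.1]{BBD}, with only notational modifications to accommodate the specific $P_n$ building block.
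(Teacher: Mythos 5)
Your core idea --- an HNN extension of $P_n$ whose stable letter carries the exponentially distorted subgroup $F(a)$ into the ultra-convex subgroup $F(t)$, with the distortion argument deferred to the template of \cite[Theorem 4.1]{BBD} --- is exactly the one the paper uses. But the way you propose to cubulate $H$ and to establish hyperbolicity diverges, and that divergence is a genuine gap.

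The paper does not leave $\phi$ abstract. It defines $H$ by the presentation $\langle a_1,\ldots,a_m,t_1,\ldots,t_n,s \mid t_i a_j t_i^{-1}=W_{ij},\ s a_l s^{-1}=W_l\rangle$, where each $W_l$ is a \emph{positive} word of length $9$ in $t_1,\ldots,t_n$ chosen, together with the $W_{ij}$'s, to satisfy the no two-letter repetition property of Definition~\ref{wiselong}; this forces the numerical constraints $9mn\le m^2$ and $9m\le n^2$. With this choice the $s$-relator $2$-cells are pentagons of exactly the same combinatorial shape as the $t_i$-relator pentagons and carry the same subdivision into five right-angled Euclidean squares (Figure~\ref{two_5_squares}), so the single-vertex presentation complex $Z$ inherits a piecewise Euclidean square structure. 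Non-positive curvature is then a direct large-link check, and hyperbolicity follows from the Flat Plane Theorem applied to $\widetilde{Z}$ --- no combination theorem is invoked. By contrast, attaching the mapping cylinder of a graph map $R_{9n}\to R_n$ realizing an arbitrary injection $\phi$ does not in general produce a non-positively curved cube complex: over each petal $e_j$ the strip $e_j\times[0,1]$ has one side of length $1$ and one side of length $\lvert\phi(a_j)\rvert$, so it is not a union of unit squares, and without positivity and the no-repetition constraints there is no reason the resulting link should satisfy the $2\pi$ condition. Quasi-convexity and malnormality of $\phi(F(a))$ in $F(t)$ are neither necessary nor sufficient for this. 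Likewise, the Bestvina--Feighn route to hyperbolicity for the HNN extension requires verifying an annuli/height condition on the pair of embeddings of the edge group into $P_n$, which you do not address and which the paper avoids entirely by working with the explicit $\CAT(0)$ model. In short, you have abstracted away precisely the combinatorial data on $\phi$ that makes the cubulation --- and hence, via Theorem~\ref{Agol's theorem}, virtual specialness --- go through.
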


\begin{proof}
Define $$H = \langle \; a_1,\ldots,a_m, t_1, \ldots , t_n, s \; | \; t_ia_jt_i^{-1} = W_{ij} \; , \; sa_ls^{-1}=W_l \; \rangle,$$ where $W_{ij}$ is a collection of $mn$ positive words of length $9$ in the letters $a_1, \ldots, a_m$ and $W_l$ is a collection of $m$ positive words of length $9$ in the letters $t_1, \ldots, t_n$. Each $W_{ij}$  and $W_k$ is chosen to satisfy the no two-letter repetition property. We chose the $W_{ij}$'s and $W_k$'s to be disjoint subwords of $\Sigma(a_1, \ldots, a_m)$ and $\Sigma(t_1, \ldots t_n)$, respectively, as defined in Definition~\ref{wiselong}. These constraints force the inequalities: $9mn \leq m^2$ and $9m \leq n^2$. As an example, take $n = 9^2$ and $ m = 9^3$.\\

Let $Z$ denote the presentation complex for $H$. The $2$-cells of $Z$ can be given a piecewise Euclidean structure by expressing them as a concatenation of right-angled Euclidean squares, as shown in Figure~\ref{two_5_squares}. An argument analogous to that of Theorem 4.1 in \cite{BBD} then shows that $Z$ is CAT$(0)$. 

\begin{figure}[h]
    \centering
    \includegraphics[width=0.5\linewidth]{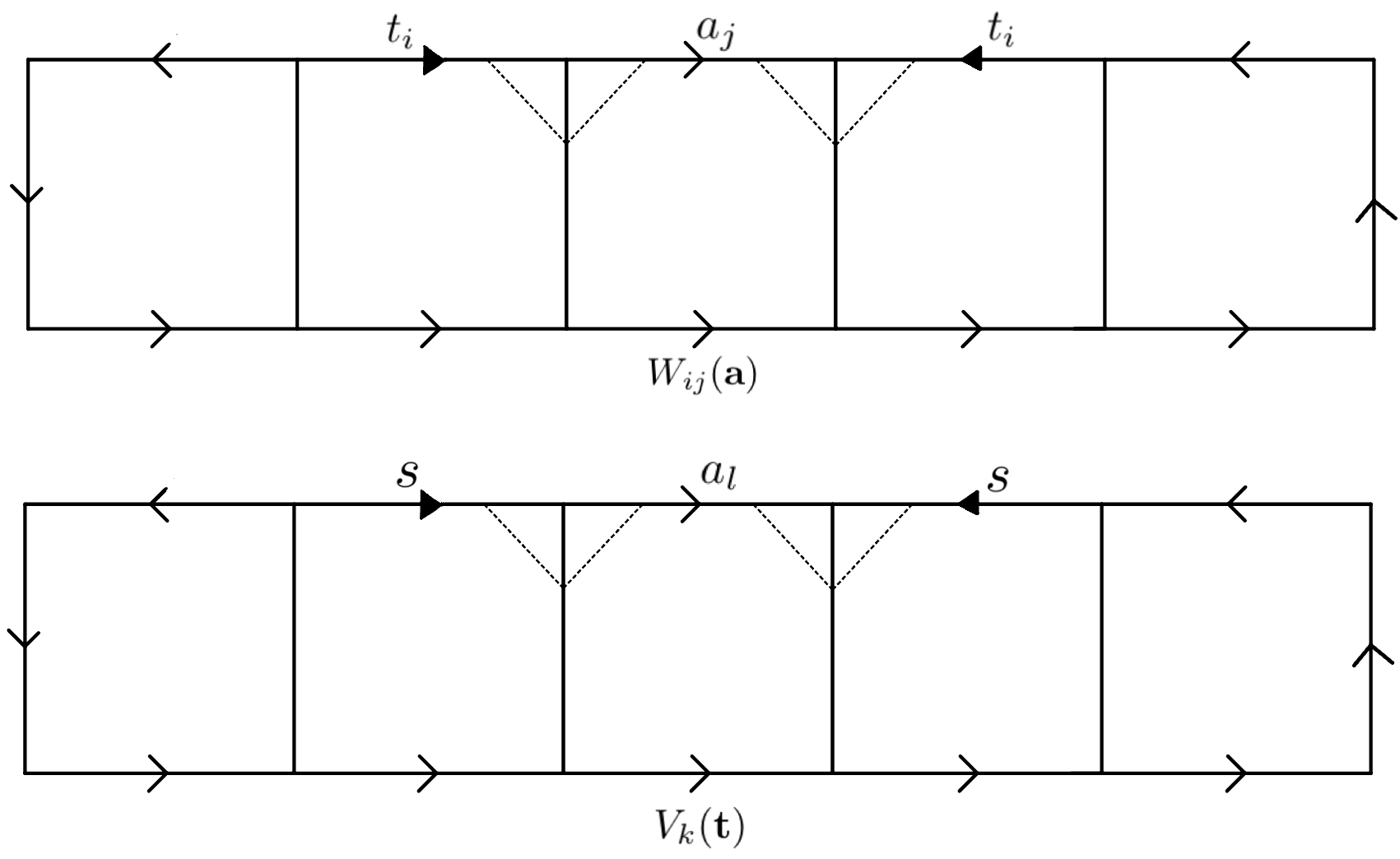}
    \caption{Cell decomposition of $Z$ into Euclidean squres.}
    \label{two_5_squares}
\end{figure}
Using arguments analogous to those in Proposition~\ref{prop:block1} (since in both cases we have the same decomposition of the $2$-relator cells into squares), we can show that $\widetilde{Z}$ contains no subspace isometric to $\mathds{E}^2$. Therefore, by the Flat Plane Theorem, it follows that the group $H$ is hyperbolic.


Lastly, the argument from Theorem 4.1 of \cite{BBD} shows that the free subgroup with the required distortion function is $F= \langle \;a_1, \ldots, a_m \; \rangle \subset H$. Thus, we get a cocompactly cubulated hyperbolic group $H$ containing a free subgroup $F$ whose distortion function $\delta_F^H$ is bigger than any iterated exponential. By Theorem~\ref{Agol's theorem}, the group $H$ is virtually special.

\end{proof}

\begin{rem} (Ackermannian distortion in virtually special groups) We provide some more examples of virtually special groups containing finite rank free groups whose distortion functions are bigger than the any iterated exponential. These examples arise from the work of Brady, Dison, and Riley in \cite{BDR}.

In \cite{DR}, Dison and Riley constructed some CAT($0$), free-by-cyclic, one relator and biautoamtic groups called the \emph{hydra groups}, $G_k$ which contain a free subgroup $H_k$ of rank $k$,  such that the distortion grows like $k$-th Ackermann's functions: $\delta_{H_k}^{G_k} \simeq A_k$. In particular, $A_1(n)=2n,$ $A_2(n)=2^n$, and $A_3(n)$ which is the $n$-fold iterated power of $2$, all occur as distortion functions. In \cite{BS}, Brady and Soroko asked whether the hydra groups $G_k$ are virtually special, which is still an open question.

The group $G_k$ is not hyperbolic, since it contains $\mathbb{Z}^2$ as a subgroup. In \cite{BDR}, Brady, Dison, and Riley constructed a hyperbolic version of the original hydra groups. This construction yields hyperbolic, free-by-cyclic, CAT($0$) groups $\Gamma_k$ containing free subgroups $\Lambda_k$ such that $\delta_{\Lambda_k}^{\Gamma_k} \succeq A_k$. 
For finite-rank free groups, hyperbolic free-by-cyclic groups act freely and cocompactly on CAT($0$) cube complex \cite{HW}. By Theorem Theorem~\ref{Agol's theorem}, cubulated hyperbolic groups are virtually special. Hence we conclude that the groups $\Gamma_k$ are virtually special. This shows that there exists a virtually special group $\Gamma_k$ containing finite rank free group $\Lambda_k$ whose distortion function satisfies 
$\delta_{\Lambda_k}^{\Gamma_k} \succeq A_k$. One observes that $A_3(n)$, which is the $n$-fold iterated power of $2$, grows faster than any iterated exponential. So, other than the examples we constructed in Theorem~\ref{bigitr}, the hyperbolic hydra groups of \cite{BDR} also provides examples of virtually special groups containing free groups whose distortion is bigger than any iterated exponential, namely, $\delta_{\Lambda_3}^{\Gamma_3} \succeq A_3(n)$.
\end{rem}

\section{Questions}

We conclude this paper with some open questions. Let $\mathbb{Z}^+$ denote the set of positive integers.\\
\begin{ques}\label{q1}  Do there exist virtually special groups containing finitely presented subgroups whose distortion functions grow like $x^{\alpha}$, for $\alpha \not\in \mathbb{Z}^{+}$ and $\alpha > 1$?
\end{ques}

\begin{ques}\label{q2} Do there exist virtually special groups containing finitely presented subgroups whose distortion functions grow like $\exp^k(x^{\alpha})$, for $k > 0$ and $\alpha > 1$ ?
\end{ques}

For $k=1$ and a rational number $\alpha > 1$, Theorem 1.1 of \cite{DARI}, and for $\alpha \in \mathbb{Z}^+$, Theorem~\ref{mainkm}, provide positive answer to Question~\ref{q2}.\\

We are not aware of any examples in the literature of CAT($0$) groups containing finitely presented subgroups whose distortion functions grow like $x^{\alpha}$, for $\alpha \not\in \mathbb{Z}^{+}$ and $\alpha > 1$. This prompts the following question.

\begin{ques}\label{q3}  Do there exist CAT($0$) groups containing finitely presented subgroups whose distortion functions grow like $x^{\alpha}$, for $\alpha \not\in \mathbb{Z}_{+}$ and $\alpha > 1$ ?
\end{ques}

In the context of super-exponential distortion of finitely presented subgroups inside CAT($0$) groups, for all integers $k >0$ and $p > q >0$, Dani and Riley in \cite{DARI} constructed hyperbolic groups containing finite rank free groups whose distortion functions grow like $\exp^k(x^{\frac{p}{q}})$. By \cite[Remark 15.6]{DARI}, these hyperbolic groups can be given CAT($0$) (and also CAT($-1$)) structures. 
\bigskip

\bigskip

\bibliographystyle{alpha} 
\bibliography{bibfile}

\end{document}